\documentclass[11pt,leqno,letterpaper]{article}

\usepackage[svgnames]{xcolor}
\usepackage{setspace}
\usepackage{abstract}
\usepackage[normalem]{ulem}
\usepackage{mathtools}
\usepackage[abbrev,lite,nobysame]{amsrefs}
\usepackage{amsthm,amsfonts}
\usepackage{thmtools}
\usepackage{csquotes}
\usepackage{enumerate}
\usepackage{bbm}
\usepackage{dsfont}
\usepackage{marvosym}

\usepackage[normalem]{ulem}
\newcommand{\stkout}[1]{\ifmmode\text{\sout{\ensuremath{#1}}}\else\sout{#1}\fi}

\def\cf{\emph{cf.\/}}\def\ie{\emph{i.e.\/}}

\def\etal{\emph{et al.\/}}

\usepackage[T1]{fontenc}
\usepackage[looser]{newtxtext}
\usepackage[bigdelims,varvw]{newtxmath}
\usepackage[protrusion=true, tracking=true, kerning=true,spacing=true]{microtype}
\pdfmapfile{=ntx.map}

\usepackage[colorlinks=true,linkcolor=NavyBlue,
citecolor=NavyBlue,urlcolor=NavyBlue,bookmarksdepth=3,hypertexnames=false]{hyperref}
\usepackage{cleveref}
\crefname{equation}{}{}

\numberwithin{equation}{section}

\newcommand{\boxnote}[1]{\makebox[0mm][l]{$#1$}}

\theoremstyle{plain}
\newtheorem{theorem}{Theorem}[section]
\newtheorem{lemma}[theorem]{Lemma}

\newtheorem{proposition}[theorem]{Proposition}
\newtheorem{example}[theorem]{Example}
\newtheorem{assumption}[theorem]{Assumption}
\newtheorem{remark}[theorem]{Remark}
\theoremstyle{definition}
\newtheorem{definition}[theorem]{Definition}

\renewcommand{\leq}{\leqslant}

\renewcommand{\geq}{\geqslant}
\renewcommand{\mathcal}{\mathscr}
\renewcommand{\tilde}{\widetilde}

\renewcommand{\longrightarrow}{\to}

\newcommand\tfootnote[1]{%
	\begingroup
	\renewcommand\thefootnote{}\footnote{#1}%
	\addtocounter{footnote}{-1}%
	\endgroup
}


\begin{document}\linespread{1.05}\selectfont
	\date{}

	\author{Manuel~Rissel \protect\footnote{Institute of Mathematical Sciences, ShanghaiTech University, Shanghai 201210, China, e-mail: \href{mailto:mrissel@shanghaitech.edu.cn}{mrissel@shanghaitech.edu.cn}} \and Marius~Tucsnak  \protect\footnote{Institut de Math\'ematiques de Bordeaux, UMR 5251, Universit\'{e} de Bordeaux/Bordeaux INP/CNRS, 351 Cours de la Lib\'eration - F 33 405 TALENCE, France, and Institut Universitaire de France (IUF), e-mail: \href{mailto:marius.tucsnak@u-bordeaux.fr}{Marius.Tucsnak@u-bordeaux.fr}}}

	\title{Approximate Tracking Controllability of Systems with Quadratic Nonlinearities}
	
	\maketitle
	
	\tfootnote{{{\bf MSC2020}: 93B05 (primary); 93C10 (secondary).} {\bf Keywords}:  tracking controllability, relaxation norm, quadratic nonlinearities, dynamic motion planning}

\begin{abstract}
	Given a finite-dimensional time continuous control system and $\varepsilon>0$, we address the question of the existence of controls that maintain the corresponding state trajectories in the $\varepsilon$-neighborhood of any prescribed path in the state space. We investigate this property, called approximate tracking controllability, for linear and quadratic time invariant systems. Concerning linear systems, our answers are negative: by developing a systematic approach, we demonstrate that approximate tracking controllability of the full state is impossible even in a certain weak sense, except for the trivial situation where the control space is isomorphic to the state space.  Motivated by these negative findings for linear systems, we focus on nonlinear dynamics. In particular, we prove weak approximate tracking controllability on any time horizon for a general class of systems with arbitrary linear part and quadratic nonlinear terms.  The considered weak notion of approximate tracking controllability involves the relaxation metric. We underline the relevance of this weak setting by developing applications to coupled systems (including motion planning problems) and by remarking obstructions that would arise for natural stronger norms. The exposed framework yields global results even if the uncontrolled dynamics might exhibit singularities in finite time. 
\end{abstract}

\newpage
	
\setcounter{tocdepth}{3}	
\tableofcontents

\section{Introduction}\label{section:introduction}

The main objective of controllability theory of time-continuous dynamical systems consists in studying the existence of input signals driving the state trajectory from an arbitrary initial state to a prescribed final state.  This theory is developed in an overwhelming
number of papers and monographs, for both finite- and infinite-dimensional systems, see for instance, Coron \cite{coron2007control} and references therein. Much less is done on a property called {\em tracking controllability}, where the aim is controlling the full state of the system or an output function at each instant $t$ in a fixed time interval $[0,\tau]$. The exact or approximate output tracking have been developed in series of papers, see, for instance, Garc\'ia-Planas and Dom\'inguez-Garc\'ia \cite{garcia2013alternative},
or Zamorano and Zuazua \cite{zamorano2024tracking} for the linear case. To our best knowledge there are no systhematic studies
of the tracking controllability of the full state. This is probably due to the fact that this is a very strong property, which seems out of reach in many classical situations. In fact, as we show in \Cref{section:l} below, for linear systems this property  holds (even in a weak sense)  only in the trivial case where the control operator is onto. Motivated by this negative result, we focus on a class of nonlinear systems for which approximate tracking controllability (in the same weak sense) holds. 

To be more precise, let $U$ (the control space), $X$ (the state space)  be finite-dimensional inner product spaces. We consider control systems described by 
\begin{equation}\label{equation:main_system}
	\begin{gathered}
		\dot{x}(t) + Ax(t) + f(x(t)) = Bu(t)  \boxnote{\qquad \qquad \, (t\in [0,\tau]),}
	\end{gathered}
\end{equation}
with the initial condition
\[
	x(0) = x_0 \in X.
\]
where $\tau>0$ is the control time, $x\colon [0,\tau]\to X$ is the state trajectory,  $u \colon [0,\tau] \longrightarrow U$ is the control function, $A \in \mathcal{L}(X)$ and $B \in \mathcal{L}(U, X)$ are linear maps from $X$ to $X$ and from $U$ to $X$, respectively, and the nonlinearity $f\colon X\longrightarrow X$ is supposed locally Lipschitz. Questions of (tracking-) controllability for such time-invariant systems arise in various applications, including complex networks \cite{Liuetal2011}, robotics \cite{LiuShenyu_etal2019}, and machine learning research \cite{Ruiz-Balet2023}; see also the respective bibliographies of these references.

The goal of the present article is to provide elementary and checkable conditions on $f$, $U$, and $B$ that ensure for any $\varepsilon > 0$ and any time $\tau > 0$ the existence of controls $u\colon[0,\tau]\to U$ which maintain the corresponding solutions to \eqref{equation:main_system} on the whole time interval $[0,\tau]$ in the $\varepsilon$-neighborhood (with respect to a suitable norm) of a prescribed function $\psi\colon [0,\tau]\longrightarrow X$. In other words, we investigate under what assumptions appropriately forced solutions to \eqref{equation:main_system} can approximately track, with respect to a suitable norm, any curve in the state space. The nature of the results obtained here will depend crucially on the considered norms and the choices of $f$, $U$, and $B$.

More precisely, the main controllability property of \eqref{equation:main_system} investigated in this article is defined below.

\begin{definition}\label{def_gen_norm}
Let $\tau>0$ and let $N_{\tau}$ be a norm on $L^2([0,\tau];X)$. The system \eqref{equation:main_system} is called approximately tracking controllable on $[0,\tau]$ with respect to $N_\tau$, if for every $\psi\in W^{1,2}((0,\tau);X)$ and $\varepsilon>0$, there exists a control function $u\in L^2([0,\tau]; U)$ such that the solution~$x$ of \eqref{equation:main_system} with $x(0)=\psi(0)$ satisfies 
\begin{equation}\label{approximate_norm_gen}
N_\tau(x-\psi)\leqslant \varepsilon.
\end{equation}  
\end{definition}

\

\begin{remark}
    The property \eqref{approximate_norm_gen} of \Cref{def_gen_norm} is independent of the initial state of~$x$. Indeed, given $\tau>0$ and $\psi\in W^{1,2}((0,\tau);X)$, assume that $x_1 \in X$ is chosen such that $x_1 \neq \psi(0)$. Then, for any $\varepsilon > 0$ fix $\widetilde{\psi} \in W^{1,2}((0,\tau);X)$ with $\widetilde{\psi}(0) = x_1$ and $N_{\tau}(\widetilde{\psi}-\psi) \leq \varepsilon/2$. Now, we can infer from the assumption that \Cref{def_gen_norm} holds the existence of $\widetilde{u} \in L^2([0,\tau]; U)$ such that the corresponding solution $\widetilde{x}$ to \eqref{equation:main_system} with $\widetilde{x}(0) = x_1$ satisfies $N_{\tau}(\widetilde{x} - \widetilde{\psi}) \leq \varepsilon/2$. This implies $N_{\tau}(\widetilde{x} - \psi) \leq \varepsilon$.
\end{remark}

\begin{remark}
    In \Cref{def_gen_norm}, it is equivalent to require $u\in C^{\infty}([0,\tau]; U)$. This follows by combining a density argument with a standard result on the continuous dependence of solutions to \eqref{equation:main_system} on the data (see \Cref{proposition:perturbativestability} below).
\end{remark}

As far as we know, the property defined above in \Cref{def_gen_norm}, where $X$ is finite-dimensional,x has not been systematically studied in the literature, namely in the nonlinear case. For linear systems; i.e. $f=0$ in \eqref{equation:main_system},
a more general concept appears in Definition 4.1 from  \cite{zamorano2024tracking}. There, $N_\tau$ in the above mentioned definition is chosen to be the $L^2([0,\tau];X)$ norm and the focus is on the characterization by duality of the defined concept; see \Cref{section:l} below for more details. To our best knowledge, for nonlinear finite-dimensional systems, the approximate tracking controllability (with $N_\tau$ being the usual norm in $L^2([0,\tau];X)$) has been considered only for affine control systems without drift, see, for instance, Liu and Sussmann \cite{sussmann1991limits},
mainly as a tool in obtaining controllability results \cite{haynes1970nonlinear,sussmann1991limits,liu1997approximation,Agrachev2024}. Let us also mention the systematic study of small-time local controllability for scalar-input systems due to Beauchard and Marbach in \cite{BeauchardMarbach2018}, which particularly highlights the role played by the choice of the norms to measure relevant quantities such as the \enquote{size} of the controlled state trajectory. In an infinite-dimensional context, some approximate tracking controllability results (with respect to a weaker norm; see \eqref{def_norm_relax} below) have been given in Koike \etal~\cite{KoikeNersesyanRisselTucsnak2025} and Nersesyan \cite{Ners-2015} for PDEs describing fluid flows. The method used there, and in parts of the present article, arises in a broader context from the Agrachev-Sarychev approach and its refinements; see Agrachev and Sarychev \cite{AS-05,AS-2006}, Shirikyan \cite{shirikyan-cmp2006}, Nersisyan \cite{nersisyan-2010}, or for a general class of parabolic PDEs the work \cite{Nersesyan2021} by Nersesyan. In \cite{BoscainCaponigroSigalotti2014}, tracking in modulus (also called tracking up to phases) has been established for a multi-input Schr\"odinger equation by Boscain, Caponigro, and Sigalotti.

 In this work we focus on two choices of the norm $N_\tau$. The first one is the standard norm in $L^2([0,\tau];U)$. The second norm we consider, called {\em relaxation norm}
(\cf~Gamkrelidze~\cite[p.55]{G-78} and \cite{AS-2006,Ners-2015, KoikeNersesyanRisselTucsnak2025}), is denoted by $|||\cdot|||_\tau$. For $\tau>0$ and $(E, \|\cdot\|_E)$ a Banach space, 
this norm is for all $v\in L^1([0,\tau];E)$ defined by
\begin{equation}\label{def_norm_relax}
\begin{aligned}
    ||| v |||_\tau \coloneqq \sup_{t\in [0,\tau]}  \left\|\int_0^t v(s)\, {\rm d}s\right\|_E.
\end{aligned}
\end{equation}

\begin{definition}\label{def_gen_norm_bis}
Let $\tau>0$. The system \eqref{equation:main_system} is called approximately tracking controllable on $[0,\tau]$
if it satisfies \Cref{def_gen_norm} with $N_\tau$ being the standard norm in $L^2([0,\tau];X)$. Moreover,
\eqref{equation:main_system} is called weakly approximately tracking controllable on $[0,\tau]$
if it satisfies \Cref{def_gen_norm} with $N_\tau$ being the relaxation norm introduced in \eqref{def_norm_relax}.
\end{definition}

 We first discuss the approximate tracking controllability for linear systems; see \Cref{section:l},
 where we prove that weak approximate tracking controllability is achievable only in the ``trivial'' case when $B$ is onto.  
 More precisely, we have:

 \begin{proposition}\label{numai_una_strong}
Assume that that $f=0$ in \eqref{equation:main_system}, so that the considered system is linear. Then, \eqref{equation:main_system} is weakly approximately tracking controllable if and only if the control operator $B$ is onto. 
\end{proposition}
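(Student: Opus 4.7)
My plan is to prove the two implications separately. The forward direction, $B$ onto $\Rightarrow$ weak approximate tracking, is almost immediate and yields even \emph{exact} tracking: since $X$ and $U$ are finite-dimensional, $B$ admits a bounded right inverse $B^{\dagger}\in\mathcal{L}(X,U)$, so for every $\psi\in W^{1,2}((0,\tau);X)$ the control $u:=B^{\dagger}(\dot{\psi}+A\psi)\in L^2([0,\tau];U)$ satisfies $Bu=\dot{\psi}+A\psi$. Hence $\psi$ itself solves $\dot{x}+Ax=Bu$ with $x(0)=\psi(0)$, so by uniqueness $x=\psi$ and $|||x-\psi|||_\tau=0$.

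For the converse I would argue by contraposition: assuming the range $V:=B(U)$ is a proper subspace of $X$, my aim is to exhibit a single target $\psi$ and a threshold $\varepsilon_0>0$ that no admissible control can meet. Fix a unit vector $\eta\in V^{\perp}$ and let $\pi\colon X\to X$ be the orthogonal projection onto $V^{\perp}$, so that $\pi B=0$ and $\pi\eta=\eta$. I claim that the affine target $\psi(t):=t\eta$ yields the desired obstruction. With $y:=x-\psi$ and $Y(t):=\int_0^t y(s)\,ds$, the relaxation norm rewrites as $|||y|||_\tau=\sup_{t\in[0,\tau]}\|Y(t)\|$, while differentiating the ODE produces
\[
\ddot{Y}(t)+A\dot{Y}(t)=Bu(t)-f(t),\qquad f(t):=\dot{\psi}(t)+A\psi(t)=\eta+tA\eta,
\]
together with the vanishing initial data $Y(0)=\dot{Y}(0)=0$ (the second because $x(0)=\psi(0)$).

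The key step is to apply $\pi$, which annihilates $Bu$ and leaves the control-free identity $\pi\ddot{Y}+\pi A\dot{Y}=-\pi f$. Integrating twice on $[0,t]$ using the vanishing initial data, one obtains
\[
-\pi Y(t)=\int_0^t\pi A\,Y(s)\,ds+\int_0^t(t-s)\pi f(s)\,ds=\int_0^t\pi A\,Y(s)\,ds+\tfrac{t^2}{2}\eta+\tfrac{t^3}{6}\pi A\eta.
\]
Under the hypothesis $|||y|||_\tau\leqslant\varepsilon$, the elementary bounds $\|\pi Y(t)\|\leqslant\varepsilon$ and $\|\int_0^t\pi A\,Y(s)\,ds\|\leqslant\tau\|A\|\varepsilon$ then yield
\[
\left\|\tfrac{t^2}{2}\eta+\tfrac{t^3}{6}\pi A\eta\right\|\leqslant(1+\tau\|A\|)\varepsilon\qquad\text{for every }t\in[0,\tau].
\]
Choosing $t\in(0,\tau]$ of size $\min(\tau,1/\|A\|)$ (or $t=\tau$ when $A=0$) makes the left-hand side bounded below by a positive constant depending only on $\tau$ and $\|A\|$, contradicting the arbitrary smallness of $\varepsilon$. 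The only genuinely new ingredient is the projection trick that removes $u$ from the integrated identity; the rest is a short quantitative calculation, and I do not anticipate a significant technical obstacle.
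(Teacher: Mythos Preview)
Your argument is correct. The forward direction is exactly what the paper leaves to the reader, and for the converse your projection-and-double-integration trick is sound: the identity
\[
-\pi Y(t)=\int_0^t\pi A\,Y(s)\,ds+\tfrac{t^2}{2}\eta+\tfrac{t^3}{6}\pi A\eta
\]
is obtained correctly, and the final quantitative step (choosing $t\sim\min(\tau,1/\|A\|)$ so that the $\eta$-term dominates the $\pi A\eta$-term) goes through and yields a strictly positive universal lower bound on $|||x-\psi|||_\tau$ for every control.

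The paper takes a rather different route. It first recasts weak approximate tracking controllability as ordinary approximate $\tilde C$-tracking controllability of the \emph{extended} system obtained by adjoining an integrator (Remark~\ref{rem_combin}), then invokes the duality criterion of Proposition~\ref{th_4.2}: the property holds iff the adjoint $\tilde\Psi_\tau$ is injective. Assuming $B$ not onto, the paper picks $\eta$ with $B^*\eta=0$ and exhibits an explicit nonzero $g$ in $\ker\tilde\Psi_\tau$, namely $g(t)=-(\tau-t)A^*\eta+\eta$, via an exact-derivative computation. Your approach avoids this duality machinery entirely: by projecting onto $(\operatorname{Range}B)^\perp$ you kill the control directly in the primal problem and obtain an explicit, quantitative obstruction target $\psi(t)=t\eta$ together with a concrete lower bound on the tracking error. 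The paper's approach, in turn, has the advantage of being a uniform template---the same adjoint construction proves Proposition~\ref{numai_una} (for the $L^2$ norm) and then \Cref{numai_una_strong} (for the relaxation norm) by simply replacing the system with its integrator-augmented version---and it connects the result to the dual characterization framework of \cite{zamorano2024tracking}.
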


The negative result above motivates the investigation of approximate tracking controllability for nonlinear systems. In this work we consider the case where $f$ in \eqref{equation:main_system} is quadratic.
More precisely, we suppose that we have:

\begin{assumption}\label{assumption:0}
The function $f$ is quadratic in the sense that $f(x)=\Gamma(x,x)$ for a bilinear function $\Gamma \colon X\times X\to X$. Without loss of generality, we can assume  that $\Gamma$ is symmetric, \ie, that $\Gamma(x,y)=\Gamma(y,x)$ for every $x,\ y \in X$. In particular, this means that
\begin{equation}\label{equation:qc}
	f(a) + f(b) = \frac{f(a+b)+f(a-b)}{2}
\end{equation}
for all $a,b \in X$. 
\end{assumption}

No assumption will be made that would rule out finite-time blowup of the uncontrolled dynamics. It will thus be part of our choice of the controls to ensure that the considered solutions are defined on the desired time interval.  Quadratic nonlinearities arise in various mathematical models or approximations thereof. In particular, second order expansions can provide more accurate approximations of general nonlinear systems compared to linearizations alone. Moreover, systems with polynomial nonlinearities can be transformed to quadratic ones by introducing new variables~\cite{Bychkovetal2024}; however, this approach might not be feasible in the presence of controls, as the controllability of the \enquote{quadratified} system might be less clear.
Another, mostly illustrative, reason for choosing quadratic nonlinearities is that they can be seen as toy models for the nonlinearities occurring in the Euler or Navier-Stokes equations in fluid dynamics, or in other infinite-dimensional systems.  It is also possible to apply the techniques presented here to Galerkin approximations of fluid PDEs or other models, but we omit a detailed study of this situation; we refer to \cite{AS-2006} and the work \cite{LionsZuazua1998} by Lions and Zuazua regarding some existing controllability results for Galerkin approximations.  It should be emphasized that the techniques presented here are tailored to quadratic nonlinearities, and extensions to non-quadratic settings are expected to be nontrivial. However, in view of the proof of \Cref{proposition:perturbativestability}, we could allow nonlinear perturbations of non-quadratic type under the strong assumption that they are small in dependence on all data and the choices of $\tau$ and $\varepsilon$ in \Cref{def_gen_norm}.

In order to state a positive result in the presence of quadratic nonlinearities, we need  extra assumptions on the nonlinearity and on the control operator. The precise formulation of these assumptions in a general form requires some preparation, so we postpone it to \Cref{section:approxtrackingnonlinear}. For this introductory section, we limit ourselves to the following stronger but easy to state (and in many cases easy to check) hypothesis: 

\begin{assumption}\label{assumption:1}
	For each $\gamma \in X$ there exist $u, \xi\in U$ such that $\gamma = Bu - f(B\xi)$.  
\end{assumption}

We state below a simplified version of our main result given later in the form of \Cref{theorem:approximatetracking}, referring to \Cref{subsection:pa1} for its proof.

\begin{proposition}\label{theorem:approximatetracking_simp}
	Let \Cref{assumption:0} and \Cref{assumption:1} be satisfied. Then for every $\tau>0$ the system \eqref{equation:main_system} is weakly approximately tracking controllable on $[0,\tau]$.
More precisely, given any $\psi \in W^{1,2}((0,\tau); X)$ and $\varepsilon > 0$, there exists a control $u \in C^{\infty}([0,\tau];U)$ such that the corresponding solution $x$ to~\eqref{equation:main_system} with $x_0 = \psi(0)$ satisfies
	\[
		|x(\tau) - \psi(\tau)| + ||| x-\psi |||_\tau < \varepsilon.
	\]
\end{proposition}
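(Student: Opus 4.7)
My plan is to reduce to the case of smooth $\psi$ and then design controls consisting of a \enquote{slow} part realising the ideal right-hand side modulo an element of the form $-f(B\xi)$, plus a singular \enquote{fast} oscillation whose quadratic average reproduces exactly the missing $-f(B\xi)$ term. First, by density of smooth functions in $W^{1,2}((0,\tau);X)$ together with the continuous dependence statement (\Cref{proposition:perturbativestability}), I may assume that $\psi\in C^\infty([0,\tau];X)$. Setting $\gamma(t):=\dot\psi(t)+A\psi(t)+f(\psi(t))$, which is continuous in $t$, I invoke \Cref{assumption:1} together with a measurable (indeed, smooth by a partition-of-unity argument on $X$) selection to obtain maps $v,\xi\in C^\infty([0,\tau];U)$ with
\[
\dot\psi(t)+A\psi(t)+f(\psi(t))\;=\;Bv(t)\;-\;f(B\xi(t)) \qquad (t\in[0,\tau]).
\]

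\textbf{Oscillating ansatz.} Fix once and for all a $1$-periodic smooth scalar function $\phi$ of zero mean whose antiderivative $\Phi(s):=\int_0^s\phi$ satisfies $\int_0^1\Phi=0$ and $\int_0^1\Phi^2=1$ (easily arranged). For integers $n$ to be sent to $+\infty$, set $\varepsilon_n:=\tau/n$ and
\[
u_n(t)\;:=\;v(t)\;+\;\tfrac{1}{\varepsilon_n}\phi(t/\varepsilon_n)\,\xi(t).
\]
I will substitute the ansatz $x_n(t)=\psi(t)+B\xi(t)\Phi(t/\varepsilon_n)+r_n(t)$ into \eqref{equation:main_system} with $x_n(0)=\psi(0)$. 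The $O(\varepsilon_n^{-1})$ terms cancel by construction, and expanding the quadratic nonlinearity through \eqref{equation:qc} together with the chosen normalisation of $\Phi$ produces an equation for $r_n$ of the form
\[
\dot r_n + A r_n + f(r_n) + 2\Gamma(\psi+B\xi\,\Phi(t/\varepsilon_n),r_n) \;=\; F_n(t)
\]
with $r_n(0)=0$, where $F_n$ gathers terms of the type $h(t)\Phi(t/\varepsilon_n)$ and $h(t)(\Phi(t/\varepsilon_n)^2-1)$ with smooth, $t$\nobreakdash-dependent coefficients $h$ depending only on $\psi,\xi,A,B,f$. The key point is that $\Phi$ and $\Phi^2-1$ are zero-mean $1$-periodic, hence an integration by parts yields $\bigl|\int_0^t F_n(s)\,\mathrm ds\bigr|=O(\varepsilon_n)$ uniformly on $[0,\tau]$.

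\textbf{Control of the remainder and conclusion.} I will show by a continuation/Gronwall argument that $r_n\to 0$ in $C^0([0,\tau];X)$. The idea is to write the remainder equation in integral form, use that the drift $-Ar_n-f(r_n)-2\Gamma(\psi+B\xi\Phi,r_n)$ is locally Lipschitz in $r_n$ with coefficients bounded uniformly in $n$ (since $\psi$, $\xi$, and $\Phi$ are bounded), and absorb the oscillatory source by integration by parts against the slowly varying parts of $F_n$. As long as $\|r_n\|_{C^0}\leq 1$, say, a Gronwall estimate gives $\|r_n\|_{C^0([0,\tau];X)}\leq C\varepsilon_n$ for a constant $C$ independent of $n$; a standard bootstrap on the maximal interval of existence then shows that $x_n$ is global on $[0,\tau]$ for all $n$ large enough, without any blow-up assumption on the uncontrolled dynamics. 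Once $\|r_n\|_{C^0}\to 0$, the relaxation norm is estimated as
\[
\bigl|||x_n-\psi|||_\tau \;\leqslant\; \Bigl\|\int_0^{\cdot}\! B\xi(s)\Phi(s/\varepsilon_n)\,\mathrm ds\Bigr\|_{C^0}\;+\;\tau\,\|r_n\|_{C^0}\;=\;O(\varepsilon_n),
\]
again by integration by parts using $\int_0^1\Phi=0$. Finally, because $\varepsilon_n=\tau/n$, one has $\Phi(\tau/\varepsilon_n)=\Phi(n)=\Phi(0)=0$, so the fast corrector vanishes at the endpoint and $|x_n(\tau)-\psi(\tau)|=|r_n(\tau)|=O(\varepsilon_n)$. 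Choosing $n$ large enough makes both contributions smaller than $\varepsilon$, completing the proof after, if desired, mollifying $u_n$ to ensure smoothness in $U$.

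\textbf{Main obstacle.} The delicate step is the uniform-in-$n$ control of $r_n$: the nonlinearity is quadratic and the coefficient $2\Gamma(\psi+B\xi\Phi(\cdot/\varepsilon_n),\cdot)$ in the linearised equation is only bounded (not small) on the fast scale, while the source $F_n$ is only small \emph{after integration}. Handling this requires carefully integrating the oscillatory forcing by parts before applying Gronwall, so that one never uses pointwise smallness of $F_n$. This is the standard Agrachev--Sarychev/Gamkrelidze relaxation mechanism, and the quadratic form of $f$ is what makes the fast oscillation act as an effective source $-f(B\xi)$ through the normalisation $\int_0^1\Phi^2=1$.
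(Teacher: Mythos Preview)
Your oscillation/averaging scheme is the same Agrachev--Sarychev mechanism the paper uses, implemented through an explicit ansatz $x_n=\psi+B\xi\,\Phi(\cdot/\varepsilon_n)+r_n$ rather than via the paper's enlarged-system formalism; once smooth $v,\xi$ are in hand, your remainder analysis, Gr\"onwall bootstrap, and relaxation-norm and endpoint estimates are all correct.

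The genuine gap is the smooth-selection step. \Cref{assumption:1} only says that the polynomial map $G(u,\xi)=Bu-f(B\xi)$ is surjective; a partition-of-unity argument cannot produce a smooth right inverse because $G$ is nonlinear (convex combinations of local sections are not sections), and the differential $DG(u,\xi)[\delta u,\delta\xi]=B\delta u-2\Gamma(B\xi,B\delta\xi)$ need not be onto at any preimage of a given point, so even local smooth sections are not guaranteed. Yet your argument genuinely needs $\xi$ to be regular: $F_n$ contains $-B\dot\xi\,\Phi(\cdot/\varepsilon_n)$, and the integration-by-parts bound $\int_0^tF_n=O(\varepsilon_n)$ uses one further derivative of the slow coefficients. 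The paper sidesteps this issue entirely by first replacing $\gamma=\dot\psi+A\psi+f(\psi)$ with a piecewise constant approximation (invoking \Cref{proposition:perturbativestability}) and then choosing \emph{constant} $\eta,\xi$ on each interval of constancy, so that no continuous selection in $\gamma$ is ever required. You can repair your proof in the same way: reduce to piecewise constant $\gamma$, run your ansatz with constant $\xi$ on each sub-interval (so $\dot\xi=0$ and $F_n$ simplifies), and synchronize the fast scale so that $\Phi$ vanishes at the partition points as well as at $t=\tau$, which keeps the corrector $B\xi\,\Phi$ continuous across junctions.
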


Note that \Cref{theorem:approximatetracking_simp} also provides the global approximate controllability of \eqref{equation:main_system}.

A natural question is whether stronger notions of approximate tracking controllability can be achieved under the assumptions in \Cref{theorem:approximatetracking_simp}. 
The example below shows that the answer is generally negative.

\begin{example}\label{example:00}
	Adhering to the notations from \Cref{equation:main_system}, we consider the following controlled nonlinear system with quadratic drift:
	\begin{equation}\label{equation:net}
		\dot{x}_1 +  x_1x_2 = u_1, \quad \dot{x}_2 + x_3^2 - x_1^2 = 0, \quad \dot{x}_3 - x_3x_2 = u_2.
	\end{equation} 
    The above system clearly meets the hypothesis of \Cref{theorem:approximatetracking_simp} with $X=\mathbb{R}^3$,
    $U=\mathbb{R}^2$, $A=0$, $Bu= [u_1, 0, u_2]^{\top}$ for $u = [u_1, u_2]^{\top} \in U$, and $f(x_1,x_2,x_3)= [x_1x_2,  x_3^2 - x_1^2,- x_3x_2]^{\top}$ for $x = [x_1, x_2, x_3]^{\top}\in X$.
    Indeed, \eqref{equation:net} obviously satisfies \Cref{assumption:0}, whereas the fact that \Cref{assumption:1} holds follows from some simple algebraic manipulations.
    Consequently, the system \eqref{equation:net} is weakly approximately tracking controllable by \Cref{theorem:approximatetracking_simp}.
    
    To prove that it is not approximately tracking controllable we choose, for simplicity, $\tau = 1$ and $x(0)=0$.
    Assume, for a contradiction argument, that for $\psi \coloneq [\psi_1, 0, 0]^{\top}$ with $\psi_1 \in C^{\infty}([0,1]; \mathbb{R}_+) $,
    \(\psi_1(0)=0\) and $\psi_1(t) = 1000$ for all $t \in [1/3, 2/3]$ there would exist controls $(u^n_1,u^n_2)_{n \in \mathbb{N}} \subset L^2((0,1);\mathbb{R}^2)$ such that the associated solutions $(x^n)_{n \in \mathbb{N}} \subset C([0,1]; \mathbb{R}^3)$ to \eqref{equation:net} satisfy
	\[
		\lim\limits_{n \to \infty}\|x^n - \psi\|_{L^2((0,1); \mathbb{R}^3)} = 0. 
	\]
	Then, there would exist a subsequence $(x^{n_k})_{k \in \mathbb{N}} \subset (x^n)_{n \in \mathbb{N}} $ converging pointwise to~$\psi$. Thus, we can fix $k \in \mathbb{N}$ such that $x^{n_k}$, which we for simplicity denote again as $x = [x_1,x_2,x_3]^{\top}$, satisfies $|x(t) - \psi(t)| < 1$ for almost all $t \in (0,1)$.
	Hence, for almost all $t \in [0,1/3]\cup[2/3, 1]$ it would hold $x_1(t) \geq -1$, for almost all $t \in [1/3, 2/3]$ one would have $x_1(t) \geq 999$, and $x_3(t) \in [-1,1]$ would be true for almost all $t \in [0,1]$. Thus, by \eqref{equation:net}, it would follow that $\dot{x}_2 = x_1^2 - x_3^2$ satisfies $\dot{x}_2(t) \geq 998000$ for almost all $t \in [1/3, 2/3]$ and $\dot{x}_2(t) \geq - 1$ for almost all $t \in [0,1/3]\cup[2/3,1]$. However, this would contradict $\|x_2\|_{_{L^2((0,1); \mathbb{R})}} < 1$, hence it would contradict the existence of the index $k$.
\end{example}
	\begin{remark}
		The same argument as in \Cref{example:00} can also be used to obtain negative approximate tracking controllability results with respect to the $L^p$-norm with $p \geq 1$. 
	\end{remark}

\emph{Organization of the remaining part of this article.} \Cref{section:l} demonstrates the lack of weak approximate tracking controllability for linear systems that are of the form \eqref{equation:main_system} with $f \equiv 0$ and $B$ not onto. Our main result, which is \Cref{theorem:approximatetracking}, is stated in \Cref{section:approxtrackingnonlinear} and some examples, including the Lorenz system, are discussed. \Cref{sec_extended} describes basic stability properties of an enlarged system driven by additive and multiplicative controls. An auxiliary trajectory approximation result for the enlarged system is establish in \Cref{new_sec_approx}, and, based on this, the proof of \Cref{theorem:approximatetracking} is completed in \Cref{subsection:pa1}. Applications of \Cref{theorem:approximatetracking} to coupled systems, dynamic control, and motion planning are presented in \Cref{section:dc}.

\section{On the lack of approximate tracking controllability for linear systems}\label{section:l}

Let $U,X$ and $Y$ be finite-dimensional inner product spaces. We consider finite-dimensional linear time-invariant
 systems (LTIs)  with
input space $U$, state space $X$ and output space $Y$ which are described by
\begin{equation}\label{SystemDiffEq_lin}
	\begin{cases}
    	\dot z(t) = A z(t) + B u(t) \boxnote{\qquad\qquad(t\geqslant 0),}\\
        y(t) = C z(t) \boxnote{\qquad\qquad\qquad \quad \, \,(t\geqslant 0),}
	\end{cases}
\end{equation}
where $u(t)\in U$, $u$ is the {\em input function}\index{input
function} (or input signal), $z(t)\in X$ is its {\em
state} at time\index{time} $t$ and $y$ is
the {\em output function} (or output signal).  In the above equations, $A,B,C$ are linear operators such that $A\colon X\to X$,
$B\colon U\to X$ and $C\colon X \to Y$. The definition below is a slight variation of Definition 4.1 from  \cite{zamorano2024tracking}.

\begin{definition}\label{def_linear}
The system \eqref{SystemDiffEq_lin} is approximately $C$-tracking controllable in time $\tau>0$ if for some  $z(0)\in X$, any output target $\tilde y\in L^2([0,\tau]; Y)$, and for each $\varepsilon>0$ there exists a control function $u\in L^2([0,\tau]; U)$ such that  
\begin{equation}\label{approximate}
\|y-\tilde y\|_{L^2([0,\tau];Y)}\leqslant \varepsilon.
\end{equation}  
\end{definition}

\begin{remark}\label{numai_zero}
One can check that the above definition is independent of the choice of the initial state $z(0)\in X$. Indeed, assume that
\Cref{def_linear} holds with $z(0)=z_0$, for some fixed $z_0\in X$. Then, for every $z_1\in X$ and
$\tilde y\in L^2([0,\tau]; Y)$ there exists a control $u_1\in L^2([0,\tau];U)$ such that the solution $\tilde z$ to the problem
\begin{equation*}
\begin{cases}
    \dot{\widetilde z} = A \widetilde{z}+Bu_1,\\
    y_1 = C\widetilde{z},\\
    \widetilde{z}(0)=z_0,
\end{cases}
\end{equation*} 
satisfies
\begin{equation*}
\|y_1-\tilde y-C\exp(tA)(z_0-z_1)\|_{L^2([0,\tau];Y)}\leqslant \varepsilon,
\end{equation*} 
so that
\begin{equation}\label{newoutput}
\|C\exp(tA)z_1+C\Phi_t u_1-\tilde y\|_{L^2([0,\tau];Y)}\leqslant \varepsilon,
\end{equation} 
where 
\begin{equation*}
    \begin{aligned}
        \Phi_t u_1=\int_0^t \exp\left[(t-\sigma)A\right] B u_1(\sigma)\, {\rm d}\sigma \boxnote{ \qquad \qquad (t\in [0,\tau]).}
    \end{aligned}
\end{equation*}
Noting that
\[
	\overline z(t)=\exp(tA)z_1+\Phi_t u_1  \boxnote{\qquad\qquad(t\in [0,\tau]),}
\]
satisfies $\dot {\overline{z}}=A{\overline{z}}+Bu_1$ on $[0, \tau]$ and $\overline z(0)=z_1$, the
estimate \eqref{newoutput} becomes $\|C\overline z-\tilde y\|_{L^2([0,\tau];Y)}\leqslant \varepsilon$. We have thus shown that \eqref{SystemDiffEq_lin} also satisfies \Cref{def_linear} with $z(0)=z_1$.

Consequently, it suffices to take $z(0)=0$, so that the system \eqref{SystemDiffEq_lin} is approximately $C$-tracking controllable  if an only if  for every $\tau>0$ the range of the operator $\mathbb{F}_\tau\in \mathcal{L}\left((L^2([0,\tau];U),L^2([0,\tau];Y)\right)$ defined by 
\begin{equation}\label{equation:ftau}
	\begin{aligned}
		\left(\mathbb F_\tau u\right)(t)=C \int_0^{t} \exp((t-\sigma)A) Bu(\sigma)\, {\rm d}\sigma   \boxnote{\qquad(t\in [0,\tau]),}
	\end{aligned}
\end{equation}
is dense in $L^2([0,\tau];Y)$.
\end{remark}

\begin{remark}\label{rem_sinteaza}
Using \Cref{numai_zero}, it follows that \eqref{SystemDiffEq_lin} satisfies \Cref{def_linear} with $Y=X$ and $C=\mathbb{I}_X$ (the identity in $X$) if and only if this system is approximately tracking controllable
in the sense of \Cref{def_gen_norm}.
\end{remark}

We also recall  the following version of Theorem 4.2 in \cite{zamorano2024tracking} which gives the adjoint of $\mathbb{F}_\tau$.

\begin{proposition}\label{th_4.2}
The adjoint of the operator $\mathbb{F}_\tau$ defined in \eqref{equation:ftau} is given by the operator 
\[
    \Psi_\tau\in \mathcal{L}(L^2([0,\tau];Y),L^2[0,\tau];U))
\]
defined by
\begin{equation}\label{adj_f_en}
	\begin{aligned}
		\left(\Psi_\tau g\right)(\sigma)=B^* \left[\int_\sigma^\tau \exp((t-\sigma)A^*) C^* g(t)\, {\rm d}t\right]
	\end{aligned}
\end{equation}
\[
	\boxnote{\qquad \quad \, (g\in L^2([0,\tau],Y),\sigma\in [0,\tau]).}
\]
Consequently, the system \eqref{SystemDiffEq_lin} is approximately $C$-tracking approximately controllable if and only if ${\rm Ker}\, \Psi_\tau=\{0\}$.
\end{proposition}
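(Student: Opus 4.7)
The plan is to verify the adjoint formula by a direct computation using the duality pairing on $L^2([0,\tau];Y)$, and then to invoke the standard Hilbert-space fact that a bounded operator has dense range if and only if its adjoint is injective, combined with the characterization of approximate $C$-tracking controllability given in \Cref{numai_zero}.

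For the adjoint identity, I would fix $u\in L^2([0,\tau];U)$ and $g\in L^2([0,\tau];Y)$ and compute
\[
\langle \mathbb{F}_\tau u, g\rangle_{L^2([0,\tau];Y)} = \int_0^\tau \left\langle C\int_0^t \exp((t-\sigma)A)Bu(\sigma)\,{\rm d}\sigma,\ g(t)\right\rangle_Y {\rm d}t.
\]
Moving $C$ through the pairing gives $C^*g(t)$ on the right, and then the plan is to apply Fubini's theorem on the triangular region $\{(t,\sigma)\in[0,\tau]^2 : 0\leq \sigma\leq t\leq \tau\}$. Since $X$, $U$, $Y$ are finite-dimensional and the semigroup $t\mapsto \exp(tA)$ is continuous (hence bounded on $[0,\tau]$), the integrand is jointly integrable, so Fubini applies without difficulty. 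Swapping the order and moving $B^*$ outside yields
\[
\langle \mathbb{F}_\tau u, g\rangle = \int_0^\tau \left\langle u(\sigma),\ B^*\int_\sigma^\tau \exp((t-\sigma)A^*)C^*g(t)\,{\rm d}t \right\rangle_U {\rm d}\sigma,
\]
which identifies the bracketed expression as $(\Psi_\tau g)(\sigma)$. Boundedness of $\Psi_\tau$ is automatic from its explicit formula.

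For the second assertion, by \Cref{numai_zero} the system \eqref{SystemDiffEq_lin} is approximately $C$-tracking controllable if and only if the range of $\mathbb{F}_\tau$ is dense in $L^2([0,\tau];Y)$. Since $\mathbb{F}_\tau$ is a bounded linear operator between Hilbert spaces, the standard identity $\overline{\operatorname{Ran}(\mathbb{F}_\tau)} = \ker(\mathbb{F}_\tau^*)^{\perp}$ shows that density of the range is equivalent to $\ker \mathbb{F}_\tau^* = \ker \Psi_\tau = \{0\}$, completing the proof.

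No step here is genuinely difficult: the only point requiring minimal care is the swap of integration order, which is justified at once by the finite-dimensional setting and the continuity of $t\mapsto \exp(tA)$ on the compact interval $[0,\tau]$. I would expect the main bookkeeping concern to be simply keeping track of the adjoints of $B$ and $C$ and of the change in the domain of integration (from $\sigma\in[0,t]$ to $t\in[\sigma,\tau]$) under Fubini.
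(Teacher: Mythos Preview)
Your proposal is correct and follows essentially the same argument as the paper: a direct Fubini computation to identify $\mathbb{F}_\tau^*=\Psi_\tau$, followed by the range--kernel duality to obtain the controllability characterization. If anything, your version is slightly more precise, since you invoke density of the range (as given by \Cref{numai_zero}) rather than the paper's looser wording ``onto''.
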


\begin{proof}
Let $g\in L^2([0,\tau],Y)$ and $u\in L^2([0,\tau],Y)$.
Then 
\begin{multline*}
\left\langle\mathbb{F}_\tau u,g\right\rangle_{L^2([0,\tau],Y)}=\int_0^\tau \int_0^t \left\langle  u(\sigma), B^*\exp((t-\sigma)A) C^* g(t)\right\rangle_U \, {\rm d}\sigma
\, {\rm d}t\\
= \int_0^\tau  \left\langle  u(\sigma), \int_\sigma^\tau B^*\exp((t-\sigma)A^*) C^* g(t)\, {\rm d}t \right\rangle_U 
\, {\rm d}\sigma,
\end{multline*}
so that $\mathbb{F}_\tau^*$ is indeed the operator $\Psi_\tau$ defined in \eqref{adj_f_en}.

The last assertion in the proposition follows from the fact that $\mathbb{F}_\tau$ is onto  if and only if ${\rm Ker}\, \mathbb{F}_\tau^*=\{0\}$.
\end{proof}

\begin{proposition}\label{numai_una}
For $\tau>0$, the system  \eqref{SystemDiffEq_lin} is  approximately tracking controllable in time $\tau$ if and only if $B$ is onto.
\end{proposition}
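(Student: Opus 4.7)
The plan is to combine \Cref{rem_sinteaza} with the characterization in \Cref{th_4.2}: approximate tracking controllability in the sense of \Cref{def_gen_norm} corresponds to setting $Y=X$, $C=\mathbb{I}_X$, so the system is approximately tracking controllable in time $\tau$ if and only if $\mathrm{Ker}\,\Psi_\tau=\{0\}$, where
\[
    (\Psi_\tau g)(\sigma) = B^*\int_\sigma^\tau \exp((t-\sigma)A^*)g(t)\,{\rm d}t.
\]
The whole proof then reduces to showing that $\mathrm{Ker}\,\Psi_\tau=\{0\}$ if and only if $\mathrm{Ker}\,B^*=\{0\}$, which (in finite dimension) is the same as $B$ being onto.

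For the sufficiency ($B$ onto $\Rightarrow$ $\mathrm{Ker}\,\Psi_\tau=\{0\}$), I would introduce the auxiliary function $h(\sigma)\coloneqq\int_\sigma^\tau \exp((t-\sigma)A^*)g(t)\,{\rm d}t$, which satisfies the backward ODE $\dot{h}(\sigma)+A^*h(\sigma)=-g(\sigma)$ with $h(\tau)=0$. If $\Psi_\tau g=B^*h=0$ and $B^*$ is injective, then $h\equiv0$, hence $g\equiv0$.

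For the necessity, assume $B$ is not onto, so that $\mathrm{Ker}\,B^*$ contains some nonzero vector $w$. The idea is to construct a nonzero $g\in\mathrm{Ker}\,\Psi_\tau$ by reverse-engineering $h$ to take values in $\mathrm{Ker}\,B^*$. Concretely, I would set $h(\sigma)\coloneqq(\tau-\sigma)w$ and define $g(\sigma)\coloneqq-\dot{h}(\sigma)-A^*h(\sigma)=w-(\tau-\sigma)A^*w$. The uniqueness for the backward ODE above then guarantees that the corresponding $\Psi_\tau g=B^*h\equiv0$. Finally, $g\not\equiv0$ because $g(\tau)=w\neq0$, providing the required element of $\mathrm{Ker}\,\Psi_\tau\setminus\{0\}$.

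I do not anticipate a real obstacle: the main point is the equivalence between $\mathrm{Ker}\,B^*=\{0\}$ and $B$ surjective in finite dimension, and the clean translation between $h$ and $g$ via the backward ODE. The only subtlety is verifying, in the necessity direction, that the constructed $g$ is indeed nonzero; the choice $h(\sigma)=(\tau-\sigma)w$ makes this automatic by evaluating at $\sigma=\tau$.
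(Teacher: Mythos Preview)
Your proposal is correct and follows essentially the same approach as the paper. For the necessity direction, the paper also picks $\eta\in\mathrm{Ker}\,B^*\setminus\{0\}$ and defines $g(t)=\eta-(\tau-t)A^*\eta$ (written there as $g=-\alpha A^*\eta-\dot\alpha\eta$ with $\alpha(t)=\tau-t$), then verifies by direct integration that $\int_\sigma^\tau\exp((t-\sigma)A^*)g(t)\,{\rm d}t=(\tau-\sigma)\eta$; your introduction of the auxiliary function $h$ satisfying the backward ODE is simply a more transparent way of organizing that same computation. The paper leaves the sufficiency direction to the reader, so your explicit argument via injectivity of $B^*$ and the ODE for $h$ is a welcome addition.
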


\begin{proof}
We first remark that for $Y=X$ and $C=\mathbb{I}_X$ \Cref{th_4.2} yields that $\mathbb{F}_\tau^*$ is given by 
\begin{equation*}\label{adj_f_bis}
\left(\mathbb{F}_\tau^*g\right)(\sigma)=B^* \left[\int_\sigma^\tau \exp((t-\sigma)A^*)  g(t)\, {\rm d}t\right]
\end{equation*} 
\[
	\boxnote{\qquad \quad \, (g\in L^2([0,\tau],X),\sigma\in [0,\tau]).}
\]
Assume that $B$ is not onto. Then there exists $\eta\in X\setminus \{0\}$ such that $B^*\eta=0$. Let
\[
    g(t)= - \alpha(t) A^*\eta-\dot\alpha(t)\eta  \boxnote{\qquad\qquad(t\geqslant 0),}
\]
with $\alpha(t)=(\tau-t)\eta$.  Then, we have $g\neq 0$ and
\begin{equation*}
	\begin{aligned}
		\int_\sigma^\tau \exp((t-\sigma)A^*)  g(t)\, {\rm d}t & =-\int_\sigma^\tau \exp((t-\sigma)A^*)  
		\left[\alpha(t) A^*\eta+\dot\alpha(t)\eta\right]\\
		& =-\int_\sigma^\tau \frac{\rm d}{{\rm d}t} \left[\alpha(t) \exp((t-\sigma)A^*)  \eta\right] \, {\rm d}t=(\tau-\sigma)\eta
	\end{aligned}
\end{equation*}
\[
	\boxnote{\qquad\qquad\qquad\qquad\qquad \, \, \, (\sigma\in [0,\tau]),}
\]
so that $\mathbb{F}_\tau^*g=0$. We have thus shown, by contraposition, that if \eqref{SystemDiffEq_lin} is  approximately tracking controllable
then $B$ is onto.

The proof of the fact that if $B$ is onto then \eqref{SystemDiffEq_lin} is approximately tracking controllable is left for the reader.
\end{proof}

\Cref{def_linear} of approximate $C$-tracking controllability depends on the choice of the norm in the left hand side of \eqref{approximate}.
Therefore, a natural question is whether this property could hold, without assuming that $B$ is onto, when we replace the $L^2([0,\tau];Y)$ norm appearing in \eqref{approximate}
by a weaker norm. A natural candidate in this direction is the {\it relaxation norm}, which has been defined in \eqref{def_norm_relax}.
Using the above norm, we introduce the concept of {\em weakly approximately $C$-tracking controllability} as follows.

\begin{definition}\label{def_linear_weak}
The system \eqref{SystemDiffEq_lin} is weakly approximately $C$-tracking controllable if for every $\tau>0$, $z(0)\in X$, any output target $\tilde y\in L^2([0,\tau]; Y)$, and for every $\varepsilon>0$ there exists a control function $u\in L^2([0,\tau]; U)$ such that  
\begin{equation*}
|||y-\tilde y|||_\tau\leqslant \varepsilon.
\end{equation*} 
\end{definition}

\begin{remark}\label{rem_inca}
Using \Cref{numai_zero}, it follows that the above property holds with $Y=X$ and $C=\mathbb{I}_X$  if and only if the system \eqref{SystemDiffEq_lin} is approximately tracking controllable in time $\tau$
(in the sense of \Cref{def_gen_norm_bis}). 
\end{remark}

\begin{remark}\label{rem_combin}
From Definitions~\ref{def_linear} and~\ref{def_linear_weak}, it follows that the system \eqref{SystemDiffEq_lin} is weakly approximately $C$-tracking controllable in time $\tau$ if an only if  the extended system
\begin{equation}\label{SystemDiffEq_ext}
	\begin{cases}
		\dot{z}_1(t)  = A z_1(t) + B u(t) \boxnote{\qquad (t\geqslant 0),}  \\
		\dot{z}_2(t)  =z_1(t) \boxnote{\qquad \qquad \qquad (t\geqslant 0),} \\
		y(t)  = C z_2(t) \boxnote{\qquad \qquad \quad \,\,\,\,  (t\geqslant 0),} 
	\end{cases}
\end{equation}
with state trajectory $z= [z_1, z_2]^{\top}$, control function $u$, and output signal $y$, is approximately $\tilde C$-tracking controllable
in time $\tau$, with  $\tilde C=\begin{bmatrix} 0,C\end{bmatrix}$.
\end{remark}

We are now in the position to give the main proof in this section.

\begin{proof}[Proof of \Cref{numai_una_strong}]
The fact that if $B$ is onto then \eqref{SystemDiffEq_lin} is  weakly approximately tracking controllable follows from
\Cref{numai_una}.

To prove the converse of the above assertion, we first remark that for $C=\mathbb{I}_X$ the system \eqref{SystemDiffEq_ext} writes
\begin{equation*}\label{SystemDiffEq_extins}
	\begin{cases}
		\dot z(t) = \tilde A z(t) + \tilde B u(t) \boxnote{\qquad (t\geqslant 0),}\\
		y(t) = \tilde C z(t) \boxnote{\qquad \qquad \quad \, \, (t\geqslant 0),}
	\end{cases}
\end{equation*}
where
\begin{equation}\label{defatilde}
    \tilde A=\begin{bmatrix}
        A & 0\\ \mathbb{I}_X & 0
    \end{bmatrix}, \qquad \widetilde{B}^*=\begin{bmatrix}
        B^* & 0
    \end{bmatrix},
    \qquad \widetilde{C}^* = \begin{bmatrix}
        0  \\ \mathbb{I}_X
    \end{bmatrix}.
\end{equation}
By combining \Cref{th_4.2} and \Cref{rem_combin}, it follows that \eqref{SystemDiffEq_lin} is  weakly approximately tracking controllable
in time $\tau$ 
if and only if the operator
\[
\tilde\Psi_\tau\in \mathcal{L}(L^2([0,\tau];Y),L^2[0,\tau];U)),
\]
defined by
\begin{equation*}\label{adj_f_en_bis}
\left(\tilde \Psi_\tau g\right)(\sigma)={\tilde B}^* \left[\int_\sigma^\tau \exp((t-\sigma){\tilde A}^*) {\tilde C}^* g(t) \, {\rm d}t\right]
\end{equation*}
\[
    \boxnote{\qquad \quad \, (g\in L^2([0,\tau],Y),\sigma\in [0,\tau]),} 
\]
satisfies
\begin{equation}\label{ker_e_zer0}
{\rm Ker}\, \tilde\Psi_\tau=\{0\}.    
\end{equation}
Since the definition of $\widetilde{A}$ in \eqref{defatilde} yields
\begin{equation*}
\exp(t{\tilde A}^*)=\begin{bmatrix}
    \exp(tA^*) & \int_0^t \exp((t-s)A^*)\, {\rm d}s\\
    0          &    0
\end{bmatrix}  \boxnote{\qquad(t\in \mathbb{R}),}
\end{equation*}
we can use the definitions of $\widetilde{B}^*$ and $\widetilde{C}^*$ in \eqref{defatilde} to obtain for all $g\in L^2([0,\tau],Y)$ and $\sigma\in [0,\tau]$ that 
\begin{equation}\label{adj_f_en_bis_bis}
    \begin{aligned}
        \left(\tilde \Psi_\tau g\right)(\sigma) &=B^* \left[\int_\sigma^\tau \left(\int_0^{t-\sigma}\exp((t-\sigma-s)A^*)\, {\rm d}s\right)  g(t) \, {\rm d}t\right]\\
& = B^* \left[\int_0^{\tau-\sigma} \int_{s+\sigma}^\tau\exp((t-\sigma-s)A^*)  g(t) \, {\rm d}t\, {\rm d}s\right]\\
& = B^* \left[\int_0^{\tau-\sigma} \int_\xi^\tau\exp((t-\xi)A^*)  g(t) \, {\rm d}t\, {\rm d}\xi\right].
    \end{aligned}
\end{equation}

In what follows, we use a contraposition argument; more precisely, we show that, if~$B$ is not onto, then \eqref{ker_e_zer0} does not hold.
To this end, we remark that, if~$B$ is not onto, then there exists $\eta\in X\setminus \{0\}$ such that $B^*\eta=0$. In this case, we denote $\alpha(t) = (\tau-t)\eta$ and define $g(t)= - \alpha(t) A^*\eta-\dot\alpha(t)\eta$ for all $t\geqslant 0$. 
In particular, it holds $g\neq 0$, and for all $\xi\in [0,\tau]$ we have
\begin{align*}
\int_\xi^\tau \exp((t-\xi)A^*)  g(t)\, {\rm d}t & = -\int_\xi^\tau \exp((t-\xi)A^*)  
\left[\alpha(t) A^*\eta+\dot\alpha(t)\eta\right]\\
& =-\int_\xi^\tau \frac{\rm d}{{\rm d}t} \left[\alpha(t) \exp((t-\xi)A^*)  \eta\right] \, {\rm d}t= (\tau-\xi)\eta.
\end{align*}
By combining the last formula and \eqref{adj_f_en_bis_bis} it follows that
\begin{equation*}
    \left(\tilde \Psi_\tau g\right)(\sigma)=\frac{(2\sigma-\tau)^2-\tau^2}{2} B^*\eta \boxnote{\qquad \quad \, \, \, \, (\sigma\in [0,\tau]),}
\end{equation*}
which implies $\tilde \Psi_\tau g=0$. We have thus shown by contraposition that, if \eqref{SystemDiffEq_lin} is weakly approximately tracking controllable
in time $\tau$, then $B$ is onto.
\end{proof}

\section{Statement of the main result}\label{section:approxtrackingnonlinear}

In this section, we state and prove our main result (\Cref{theorem:approximatetracking}), which provides weak approximate tracking controllability of \eqref{equation:main_system} in the presence of quadratic nonlinearities.  Moreover, we provide several illustrative examples.

To specify suitable choices for $f$, $U$, and $B$, we borrow from \cite{KoikeNersesyanRisselTucsnak2025,Ners-2015} a saturation assumption that is more general than \Cref{assumption:1}; for further details, see also \cite{AS-2006} and \cite{shirikyan-cmp2006}, as well as the references therein. 

Before stating this more general assumption below (see~\Cref{assumption:2}), we highlight \Cref{assumption:1} as it simplifies the argument presented later, and as it gives already rise to a rich class of examples.

\begin{example}
	There has been interest in types of tracking controllability for the Lorenz system; for instance, see \cite{Gao2019} and the references therein. In this direction, we consider $x = [x_1, x_2, x_3]^{\top} \colon \mathbb{R}^3\longrightarrow\mathbb{R}^3$ as the solution to the controlled nonlinear problem \eqref{equation:main_system} with
	\begin{gather*}
		f(x) \coloneq \begin{bmatrix}
			0  \\
			x_2x_3 \\
			-x_1x_2
		\end{bmatrix}, \quad A \coloneq \begin{bmatrix}
			\sigma & - \sigma & 0  \\
			- \rho & 1 & 0 \\
			0 & 0 & \beta 
		\end{bmatrix}, \quad
		B \coloneq \begin{bmatrix}
			1 & 0  \\
			0 & 1  \\
			0 & 1 
		\end{bmatrix},
	\end{gather*}
	where $\beta, \sigma, \rho > 0$. One can readily check by means of elementary algebraic manipulations that \Cref{assumption:1} holds for the above choices of $f$ and $B$ with the two-dimensional control space $U = \mathbb{R}^2$.
\end{example}

The next definition states a generalized version of \Cref{assumption:1}, allowing us to consider certain high-dimensional systems driven by low-dimensional inputs. For additional context, see \cite{shirikyan-cmp2006} and the references therein, in particular the keyword \enquote{convexification principle}.

\begin{definition}\label{definition:ls}
	Given any subspace $E \subset X$, we denote by $\mathscr{F}(E)$ the largest subspace of $X$ such that any $\gamma \in \mathscr{F}(E)$ can be expressed in the form $\gamma = \xi_0 - \sum_{i=1}^p f(\xi_i)$ for some $p \in \mathbb{N}$ and $\xi_0, \xi_1, \dots, \xi_p \in E$.  
\end{definition}

\begin{assumption}\label{assumption:2}
	For $(E_i)_{i \in \mathbb{N}}$ recursively defined by $E_0 \coloneq \operatorname{Range}(B)$ and $E_i \coloneq  \mathscr{F}(E_{i-1})$ for $i \in \mathbb{N}$, there exists a number $n_X \in \mathbb{N}$ such that $E_{n_X} = X$.
\end{assumption}

The following simple example illustrates that \Cref{assumption:2} is more general than \Cref{assumption:1}.

\begin{example}
    Let $X = \mathbb{R}^6$, $U=\mathbb{R}^3$, and consider the controlled system \eqref{equation:main_system} with $f$ and $B$ chosen as follows:
    \begin{equation*}
		f(x_1,x_2,x_3, x_4,x_5,x_6) \coloneq [0,0,0,x_1x_2, x_1x_3, x_2x_3]^{\top}, \quad B \coloneq [\mathbb{I}_{\mathbb{R}^{3\times3}}, 	\mathbb{O}_{\mathbb{R}^{3\times3}}]^{\top} ,
	\end{equation*}
	where $\mathbb{I}_{\mathbb{R}^{3\times3}}$ denotes the identity matrix in $\mathbb{R}^3$ and $\mathbb{O}_{\mathbb{R}^{3\times3}}$ the $3\times3$ matrix with all entries being zero. It is not difficult to check that \Cref{assumption:2} is satisfied in this case. On the other hand, \Cref{assumption:1} is not satisfied; one can take, for instance, $\gamma = [0,0,0, a, 0, b]^{\top} \in \mathbb{R}^6$ with $a \neq 0 \neq b$ and readily verify that $\gamma = B[u_1,u_2,u_3]^{\top} - f(B [\xi_1,\xi_2, \xi_3]^{\top})$ would imply that both $\xi_3 = 0$ and $\xi_3 \neq 0$, which is impossible.
\end{example}

 We are now in the position to state our main result. 

\begin{theorem}\label{theorem:approximatetracking}
	Let \Cref{assumption:2} be satisfied and $\tau > 0$. Given any $\psi \in W^{1,2}((0,\tau); X)$ and $\varepsilon > 0$, there exists a control $u \in C^{\infty}([0,\tau];U)$ such that the corresponding solution $x$ to~\eqref{equation:main_system} with $x_0 = \psi(0)$ satisfies
	\[
		|x(\tau) - \psi(\tau)| + ||| x-\psi |||_{\tau} < \varepsilon.
	\]
\end{theorem}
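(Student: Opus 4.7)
My overall strategy is to prove the theorem by induction on the saturation level from \Cref{assumption:2}, exploiting the hierarchy $E_0 \subset E_1 \subset \cdots \subset E_{n_X} = X$. First I reduce the problem to tracking smooth targets: combining the density of $C^{\infty}$ in $W^{1,2}$ with the continuous-dependence (perturbative stability) result announced in \Cref{sec_extended}, it is enough to establish the conclusion for $\psi \in C^{\infty}([0,\tau];X)$. Setting $g(t) \coloneq \dot\psi(t) + A\psi(t) + f(\psi(t))$, which is then smooth, the task becomes to construct $u \in C^{\infty}([0,\tau];U)$ so that the forcing $Bu$ effectively compensates $g$, in the sense that $|x(\tau)-\psi(\tau)| + |||x-\psi|||_{\tau}$ is small.

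Next, I work with the enlarged system introduced in \Cref{sec_extended}, which carries both additive and multiplicative controls. The effect on the state of a rapid, large-amplitude oscillation of the original input $u$ is, at leading order, captured by this enlarged system: the integrated linear part contributes an additive term, while the quadratic nonlinearity evaluated on the oscillation contributes a nontrivial mean. The trajectory approximation lemma of \Cref{new_sec_approx} should then imply that any admissible trajectory of the enlarged system can be approached, in the relaxation norm plus endpoint, by actual solutions of \eqref{equation:main_system} driven by suitably oscillating smooth controls. The symmetry relation \eqref{equation:qc} is the algebraic heart of this step: since $f(a)+f(b)=\frac12(f(a+b)+f(a-b))$, and in particular $f(\pm\xi)=f(\xi)$, one can realize $f(\xi)$ as the time-average of $f$ applied to a sign-alternating oscillation of values $\pm\xi$, while the integral of the oscillation itself remains negligible. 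This is the quadratic instance of the Agrachev--Sarychev convexification principle.

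With the approximation lemma in hand, I then run the induction on $i$. The base case $i=0$ amounts to solving $Bu=g(t)$, which admits a smooth solution whenever $g$ takes values in $E_0=\operatorname{Range}(B)$. The induction step asserts that the same compensation can be achieved for $g(t) \in E_{i+1}$ provided it is achievable for every smooth $g(t) \in E_i$. Given $g(t) \in E_{i+1}$, \Cref{definition:ls} provides a decomposition $g(t) = \xi_0(t) - \sum_{j=1}^{p}f(\xi_j(t))$ with $\xi_j(t) \in E_i$, and after mollification these representations can be chosen smooth in $t$. The $f(\xi_j)$ contributions are then generated, via the enlarged system and the approximation lemma, by the quadratic effect of $E_i$-valued oscillations, while the residual linear forcing $\xi_0 \in E_i$ is handled by the induction hypothesis. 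After $n_X$ iterations one reaches $E_{n_X}=X$, which covers the original target $g$. The terminal estimate $|x(\tau)-\psi(\tau)|$ will be built into the approximation scheme, for example by reserving a short terminal subinterval on which the oscillations are switched off and a direct correction in $\operatorname{Range}(B)$ is applied.

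The main technical obstacle is the trajectory approximation step itself: one must control, uniformly on $[0,\tau]$, the interaction of large-amplitude, high-frequency oscillations with the quadratic nonlinearity, while ruling out finite-time blowup despite the absence of any global well-posedness assumption on the uncontrolled dynamics. This is expected to require uniform \emph{a priori} bounds on the entire family of approximating solutions, obtained through a Gr\"onwall-type estimate that absorbs the multiplicative contribution of the oscillating part, together with careful bookkeeping of the several relevant scales (oscillation period, oscillation amplitude, and the time horizon $\tau$). Once these estimates are in place, the remainder of the argument reduces to the algebraic saturation induction outlined above.
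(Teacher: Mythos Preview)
Your proposal is correct and follows essentially the same route as the paper: define the ideal forcing $\gamma = \dot\psi + A\psi + f(\psi)$, invoke the enlarged system of \Cref{sec_extended} and the approximation result of \Cref{new_sec_approx}, and run the saturation induction along $E_0 \subset \cdots \subset E_{n_X} = X$.

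Two technical points are handled more cleanly in the paper than in your sketch. First, rather than seeking a smooth-in-time decomposition $g(t) = \xi_0(t) - \sum_j f(\xi_j(t))$ (whose existence is not obvious, since the representation in \Cref{definition:ls} is pointwise and neither $p$ nor the $\xi_j$ need depend continuously on $\gamma$), the paper reduces via \Cref{proposition:perturbativestability} to \emph{piecewise constant} $\gamma$, so that on each interval of constancy the decomposition is fixed and the issue disappears. Second, the passage from the enlarged system back to \eqref{equation:main_system}, together with the endpoint control, is obtained by the explicit substitution $\mathcal{R}(w_0,\widetilde\zeta_n,\gamma_n) = \mathcal{R}(w_0,0,\gamma_n + \dot{\widetilde\zeta}_n) - \widetilde\zeta_n$ with $\widetilde\zeta_n \in C_0^\infty((0,\tau);E_{l-1})$; since $\widetilde\zeta_n(\tau)=0$ the terminal values coincide automatically, and the relaxation-norm smallness of $\widetilde\zeta_n$ gives the tracking estimate. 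This replaces your proposed ``short terminal subinterval'' correction and makes the endpoint estimate fall out for free.
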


 Most of the remaining part of this work is devoted to the proof of the above result (see \Cref{subsection:pa1} for the conclusion of the argument). The strategy of this proof relies on adaptation of arguments in \cite{KoikeNersesyanRisselTucsnak2025}. Some applications of \Cref{theorem:approximatetracking} to coupled systems are discussed in \Cref{{section:dc}}.

\section{An enlarged system}\label{sec_extended}

A classical methodology in nonlinear control theory, introduced in  
Jurdjevic and Kupka \cite{JK81}, called {\em enlargement method}, consists in adding appropriately chosen 
controls and first studying this enlarged system. By ``appropriately" we mean  here  that the enlarged system has to be controllable and that it should be possible to prove that returning to the original system does not essentially
alter its accessibility sets. In this work we apply a version of this methodology 
developed by  \cite{shirikyan-cmp2006} in the continuation of  \cite{AS-05,AS-2006} 
(see also \cite{Ners-2015,KoikeNersesyanRisselTucsnak2025}), in the context of approximate control of the Navier-Stokes or Euler systems. More precisely,  
we introduce the enlarged system
\begin{equation}\label{equation:aux_system}
	\begin{gathered}
		\dot w(t)+ A(w(t) + \zeta(t)) + f(w(t)+\zeta(t)) = \gamma(t),\\
		w(0) = w_0,
	\end{gathered}
\end{equation}
which is driven by two inputs $\zeta, \gamma \colon [0,\tau]\to X$ and reduces to \eqref{equation:main_system} when $\zeta=0$.  
The crucial ingredient  of the proof of \Cref{theorem:approximatetracking} consists in proving that solutions to \eqref{equation:main_system}, driven by an additive input, can be approximated by solutions to \eqref{equation:aux_system}, driven by two inputs; see \Cref{subsection:pa1} below for the details. 

In this section, we limit ourselves to give a consequence of well-known theorems on continuous dependence of solutions of Cauchy problems with respect to parameters (see, for instance, Gamkrelidze \cite[Ch.4]{gamkrelidze2013principles} or \cite[Section 2.6]{barbu2016differential}).  Note that a similar result for PDEs, namely for the $3$D Navier--Stokes system, can be found in \cite[Theorem 1.3]{Ners-2015}  (see also  \cite{Ners-2015}, \cite{shirikyan-cmp2006}). We state this in \Cref{proposition:perturbativestability} below and, for the reader's convenience, sketch the main steps of the proof.  But first, we introduce the following convenient notation to describe solutions to \eqref{equation:aux_system}, whenever they exist, in terms of the imposed data.

\begin{definition}
    Whenever \eqref{equation:aux_system} admits for given $\tau > 0$, $w_0\in X$, $\zeta\in L^2([0,\tau]; X)$, and $\gamma\in L^1([0,\tau];X)$ a unique solution, we denote by
	\[
		\mathcal{R} \colon X\times L^2([0,\tau]; X)\times L^1([0,\tau];X) \longrightarrow W^{1,1}([0,\tau];X)
	\]
    the resolving operator of \eqref{equation:aux_system}. That is, $\mathcal{R}$ maps $(w_0,\zeta,\gamma)\in X\times L^2([0,\tau]; X)\times L^1([0,\tau];X)$ to the unique function~$w = \mathcal{R}(w_0,\zeta,\gamma)$ solving \eqref{equation:aux_system}, as long as such a solution exists.
\end{definition}

We can state now the perturbative well-posedness result on which we will rely extensively in the following sections.

\begin{proposition}\label{proposition:perturbativestability}
	Given $w_0\in X$ and $\zeta \in L^2([0,\tau];X),\ \gamma \in L^1([0,\tau];X)$, assume that \eqref{equation:aux_system} has a solution $w\in W^{1,1}([0,\tau];X)$. Moreover, let $M>0$ be such that
    \begin{equation*}\label{MMARE} 
		\|\zeta\|_{L^2([0,\tau]; X)} + \|\gamma\|_{L^1([0,\tau];X)} + \|w\|_{W^{1,1}([0,\tau];X)}\leqslant M.
	\end{equation*}
    Then, there exist constants $\delta, c > 0$, depending only on $M$ and on $\tau$,
	such that for any $v_0\in X$, $\xi\in L^2([0,\tau]; X)$ and $g\in L^1([0,\tau];X)$ with
	\begin{align}\label{micime}
		|w_0-v_0|+\|\zeta - \xi\|_{L^2([0,\tau]; X)}+\|\gamma - g\|_{L^1([0,\tau]; X)}\leqslant \delta
	\end{align}
	the problem \eqref{equation:aux_system} with initial state $w_0$ replaced by $v_0$ and controls $(\zeta, \gamma)$ replaced by $(\xi, g)$ admits a unique solution $v = \mathcal{R}(v_0,\xi, g)$ on $[0,\tau]$ satisfying
	\begin{equation}\label{equation:pe}
		\begin{aligned}
			& \|\mathcal{R}(w_0,\zeta, \gamma)-\mathcal{R}(v_0,\xi, g)\|_{C([0,\tau]; X)} \\
			& \quad \leqslant c\, \left( |w_0-v_0|+\|\zeta-\xi\|_{L^2([0,\tau]; X)}+\|\gamma-g\|_{L^1([0,\tau]; X)} \right).
		\end{aligned}
	\end{equation}
\end{proposition}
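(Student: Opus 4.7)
The plan is a standard continuous-dependence argument for ODEs with locally Lipschitz nonlinearities: establish a Gronwall-type bound on an a priori bootstrap interval, then use a continuation step to rule out blow-up of the perturbed solution before time $\tau$. Because $w \in W^{1,1}([0,\tau]; X) \hookrightarrow C([0,\tau]; X)$, the reference solution is pointwise bounded, and I set $R \coloneqq 1 + \|w\|_{C([0,\tau]; X)}$. Local well-posedness of \eqref{equation:aux_system} with data $(v_0, \xi, g)$ (rewritten as $\dot v + Av + f(v+\xi) = -A\xi + g$, for which classical Cauchy--Lipschitz theory applies) yields a unique maximal solution $v = \mathcal{R}(v_0, \xi, g)$ on some interval $[0, T_\ast)$, and I introduce the bootstrap time
\[
	T^* \coloneqq \sup\bigl\{ T \in [0, \min(T_\ast, \tau)] : \|v\|_{C([0,T]; X)} \leq R \bigr\}.
\]

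On $[0, T^*]$, the error $e \coloneqq w - v$ satisfies $\dot e + Ae = -A(\zeta - \xi) - [f(w + \zeta) - f(v + \xi)] + (\gamma - g)$ with $e(0) = w_0 - v_0$. Splitting
\[
	f(w+\zeta) - f(v+\xi) = \bigl[ f(w+\zeta) - f(w+\xi) \bigr] + \bigl[ f(w+\xi) - f(v+\xi) \bigr]
\]
and invoking the local Lipschitz property of $f$ on balls (whose radius is controlled because $|w|, |v| \leq R$ pointwise on the bootstrap interval), one obtains a pointwise bound of the form $|f(w+\zeta) - f(v+\xi)|(s) \leq \kappa(s) |e(s)| + \kappa(s) |\zeta(s) - \xi(s)|$ with a weight $\kappa \in L^1(0,\tau)$ whose $L^1$-norm is controlled by $M$ and $\tau$ (once $\|\xi\|_{L^2} \leq M + 1$, which is ensured as soon as $\delta \leq 1$). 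Integrating the error equation, using Cauchy--Schwarz to pass from $L^2$ to $L^1$ on $[0,\tau]$, and applying the integral form of Gronwall's lemma then yield \eqref{equation:pe} on $[0, T^*]$ with a constant $c = c(M, \tau)$.

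For the continuation step, I would choose $\delta > 0$ so small that $c\,\delta < 1/2$ and $\delta \leq 1$. Under \eqref{micime}, the previous display forces $\|e\|_{C([0, T^*]; X)} < 1/2$, so $\|v\|_{C([0, T^*])} \leq \|w\|_{C([0,\tau])} + 1/2 < R$ strictly. By the standard non-explosion criterion for ODEs the maximal existence interval extends past $T^*$, and the strict inequality shows that the bootstrap bound cannot be saturated, forcing $T^* = \tau$.

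The main technical delicacy is in the middle step: because $\zeta$ and $\xi$ lie only in $L^2$ rather than in $L^\infty$, the quantity $w + \zeta$ is not bounded pointwise in time and one cannot directly invoke a fixed Lipschitz constant of $f$ along the trajectory in the controls. The remedy is to keep only $w$ and $v$ bounded pointwise via the bootstrap and to absorb the controls into the $L^1$-weight $\kappa$, paying a factor $\sqrt{\tau}$ via Cauchy--Schwarz. In the quadratic case covered by \Cref{assumption:0} this decomposition is completely explicit through the polarization identity $f(a) - f(b) = \Gamma(a+b, a-b)$, giving directly $\kappa(s) = \|\Gamma\|\,(|w(s)| + |v(s)| + |\zeta(s)| + |\xi(s)|)$; once this weight is identified, everything is routine.
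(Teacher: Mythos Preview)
Your argument is correct and follows a genuinely different route from the paper. The paper proceeds in two decoupled stages: first it invokes an external continuous-dependence theorem (Gamkrelidze, \cite[Theorem~4.4]{gamkrelidze2013principles}) to obtain existence of $v$ on all of $[0,\tau]$ together with a crude bound $\|w-v\|_{C([0,\tau];X)}\le\varepsilon<1$, and only then derives the Lipschitz estimate~\eqref{equation:pe} by writing the error equation for $r=w-v$ via the variation-of-constants formula around the time-dependent linear operator $\mathcal{A}(t)x=Ax+\Gamma(w+\zeta,x)+\Gamma(x,v+\xi)$. Your approach instead merges existence and the quantitative bound into a single bootstrap--Gronwall step, which is more self-contained (no external citation needed) and arguably more transparent about where the constants come from. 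Both arguments ultimately hinge on the same quadratic polarization $f(a)-f(b)=\Gamma(a+b,a-b)$ to produce an $L^1$-in-time weight despite $\zeta,\xi$ being only $L^2$; you flag this point explicitly, while in the paper it is implicit in the boundedness of $U(t)$ and $U(t)^{-1}$. The paper's two-stage structure has the minor advantage that the existence part is stated for general locally Lipschitz~$f$, whereas your bootstrap as written relies on the quadratic form to make $\kappa$ integrable---but since the paper's own Lipschitz estimate also uses $\Gamma$, this is not a real discrepancy.
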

\begin{proof}
	
{\em The first step} consists of applying \cite[Theorem 4.4]{gamkrelidze2013principles}. According to the above quoted result, for every $\varepsilon \in (0,1)$ there exists $\delta>0$, possibly depending on~$M$ and~$\tau$, such that for every $v_0$, $\xi$, and $g$ satisfying \eqref{micime} the system \eqref{equation:aux_system}, with $w_0$ replaced by $v_0$ and  $(\zeta, \gamma)$ replaced by $(\xi, g)$, admits a unique solution $v = \mathcal{R}(v_0,\xi, g)$ that is defined on $[0,\tau]$ and satisfies
\begin{equation}\label{equation:pe_eps}
			 \|\mathcal{R}(w_0,\zeta, \gamma)-\mathcal{R}(v_0,\xi, g)\|_{C([0,\tau]; X)} \leqslant \varepsilon.
	\end{equation}

{\em Second step.} With $\delta$ chosen at the first step, we note that $r$ defined by $r=w-v$ satisfies on $[0,\tau]$ the equation
	\begin{multline}\label{equation:remainder}
			\dot{r} + Ar + \Gamma(w+\zeta,r)+\Gamma (r,v+\xi) \\= A (\zeta-\xi) + \gamma - g
            -\Gamma(w+\zeta,\zeta-\xi)-\Gamma(\zeta-\xi,v+\xi),
	\end{multline}
where we recall from \Cref{section:introduction} that $\Gamma\colon X\times X\to X$ is the symmetric bilinear function associated to $f$; namely, $f(x)=\Gamma(x,x)$ for $x\in X$.
For $t\in [0,\tau]$ we define $\mathcal{A}:[0,\tau]\to \mathcal{L}(X)$ by setting $\mathcal{A}(t)x \coloneq Ax + \Gamma(w+\zeta,x)+\Gamma (x,v+\xi)$ for all $t\in [0,\tau]$ and $x\in X$.
Moreover, we define  $U\colon[0,\tau]\to \mathcal{L}(X)$ as the solution of the initial value problem
\[
    \begin{cases}
        \dot U(t)=-\mathcal{A}(t) U(t),  \boxnote{\qquad\quad \,(t\in(0,\tau)),}\\
        U(0)=\mathbb{I}_X.
    \end{cases}
\]
With the above notation, the equation \eqref{equation:remainder} and the variation of constants formula yield that
\begin{equation}\label{Duhamel}
r(t)=U(t)(w_0-v_0)+\int_0^t U(t)U^{-1}(\sigma) \alpha(\sigma)\, {\rm d}\sigma \boxnote{\quad(t\in [0,\tau]),} 
\end{equation}
where the function $\alpha\colon[0,\tau]\to X$ is given by
\begin{equation}\label{def_alfa}
	\alpha \coloneq A (\zeta-\xi) + \gamma - g
	-\Gamma(w+\zeta,\zeta-\xi)-\Gamma(\zeta-\xi,v+\xi).
\end{equation}
It is not difficult to check (using \eqref{equation:pe_eps}) that, with $\delta$ chosen in the first step, the maps $t\mapsto \|U(t)\|_{\mathcal{L}(X)}$ and $t\mapsto \|U^{-1}(t)\|_{\mathcal{L}(X)}$ are bounded on $[0,\tau]$ by constants depending only on $M$ and $\tau$. This fact and \eqref{Duhamel} imply that there exists a constant $c_1 > 0$ (depending only on $M$ and $\tau$) such that
\begin{equation}\label{Duhamel_ineq}
\|r(t)\|_X \leqslant c_1\left(\|w_0-v_0\|_X+\int_0^t \|\alpha(\sigma)\|_X \, {\rm d}\sigma\right) \boxnote{\quad \,\,\,(t\in [0,\tau]).}
\end{equation}
On the other hand, from \eqref{equation:pe_eps} and \eqref{def_alfa} it follows that there exists $c_2>0$ (depending only on $M$ and $\tau$) such that
\[
\|\alpha\|_{L^1([0,t];X)}\leqslant c_2 \left(\|\zeta-\xi\|_{L^2([0,\tau];X)}+\|\gamma-g\|_{L^1([0,\tau];X)}\right)
\]
\[
	 \boxnote{\qquad \qquad \qquad \qquad \qquad \quad \, (t\in [0,\tau]).}
\]
The above estimate and \eqref{Duhamel_ineq} yield the conclusion \eqref{equation:pe}.

\end{proof}

\section{An approximation result}\label{new_sec_approx}

We denote the vector spaces $(E_l)_{l\in\mathbb{N}}$ constructed as in \Cref{assumption:2}. The main result of this section asserts under an assumption on $f$ that for $l\in \mathbb{N}$, the solutions to \eqref{equation:aux_system} with $\gamma$ taking values in $E_l$ and $\zeta=0$ can be approximated by a sequence of solutions to the same system driven by a sequence of additive inputs $(\gamma_n)_{n \in \mathbb{N}}$ with values in $E_{l-1}$ and a sequence of multiplicative inputs $(\zeta_n)_{n \in \mathbb{N}}$ with values in $E_{l-1}$ that vanishes asymptotically in the relaxation norm. Such approximation properties are known for incompressible fluids controlled through a finite number of Fourier modes (see \cite{Ners-2015,KoikeNersesyanRisselTucsnak2025}) and we adapt these techniques to the present setting. More precisely, we have:

\begin{proposition}\label{proposition_pwc}
	Let $l \in \mathbb{N}$, $\gamma \colon [0,\tau]\to E_l$ be a piecewise constant function, and $w_0\in X$. Moreover, assume that the trajectory $\mathcal{R}(w_0,0,\gamma)$ is well-defined on $[0,\tau]$ and that any $\gamma \in E_l$ is of the form $\gamma = \xi_0 - \sum_{i=1}^p f(\xi_i)$ for some $p \in \mathbb{N}$ and $\xi_0, \xi_1, \dots, \xi_p \in E_{l-1}$. Then, there is a sequence $\{(\gamma_n,\zeta_ n)\}_{n\in \mathbb{N}} \subset C^{\infty}([0,\tau];E_{l-1}\times E_{l-1})$ such that 
	\begin{equation*} 
		\sup_{n\in \mathbb{N}}\left(\| \zeta_n\|_{C([0,\tau]; X)}+\|\gamma_n\|_{L^2([0,\tau]; X)}\right) < \infty
	\end{equation*}
	and
	\begin{equation*}
		\| \mathcal{R}(w_0,0,\gamma)- \mathcal{R}(w_0, \zeta_n, \gamma_n)\|_{C([0,\tau]; X)}+  |||\zeta_n|||_{\tau} \to 0  
	\end{equation*}
	as $n\to\infty$.
\end{proposition}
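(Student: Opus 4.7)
The overall strategy is as follows. Since $\gamma$ is piecewise constant, by concatenation and continuous dependence (\Cref{proposition:perturbativestability}) it suffices to construct the approximating sequence on a single sub-interval $I \subset [0,\tau]$ on which $\gamma \equiv \xi_0 - \sum_{i=1}^p f(\xi_i)$ with $\xi_0,\dots,\xi_p \in E_{l-1}$; a final mollification at a scale much smaller than the slots introduced below produces the $C^\infty$ regularity. Expanding $f(w_n + \zeta_n) = f(w_n) + 2\Gamma(w_n, \zeta_n) + f(\zeta_n)$ rewrites \eqref{equation:aux_system} as
\begin{equation*}
\dot w_n + A w_n + f(w_n) = F_n, \qquad F_n := \gamma_n - A\zeta_n - 2\Gamma(w_n, \zeta_n) - f(\zeta_n).
\end{equation*}
My plan is to take $\gamma_n \equiv \xi_0$ (the linear part, already in $E_{l-1}$) and to design $\zeta_n$ valued in $E_{l-1}$ as fast oscillations so that the antiderivative $P_n(t) := \int_0^t (F_n - \gamma)(s)\,{\rm d}s$ is uniformly small on $I$, while $|||\zeta_n|||_\tau \to 0$. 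The algebraic input is \eqref{equation:qc}: because $f(\pm \sqrt{p}\,\xi_i) = p\,f(\xi_i)$ regardless of sign, switching the sign of $\zeta_n$ leaves $f(\zeta_n)$ untouched while making $\zeta_n$ small in the relaxation norm.

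For the construction, I partition $I$ into $n$ equal sub-intervals, further split into $p$ consecutive slots of length $|I|/(np)$; on the $i$-th slot, $\zeta_n$ is a $\pm\sqrt{p}\,\xi_i$-valued square wave of period much smaller than the slot length, then mollified. This yields $C^\infty$ functions with $\sup_n \|\zeta_n\|_{C([0,\tau];X)} < \infty$ and $\gamma_n$ bounded in $L^2$. The quadratic averaging gives $\sup_{t \in I} \bigl|\int_0^t (f(\zeta_n) - \sum_{i=1}^p f(\xi_i))\,{\rm d}s\bigr| = O(|I|/n)$, while the square-wave sign cancellation gives $\sup_{t} \bigl|\int_0^t \zeta_n\,{\rm d}s\bigr| \to 0$, and hence $|||\zeta_n|||_\tau \to 0$ together with $\sup_{t}\bigl|\int_0^t A\zeta_n\,{\rm d}s\bigr| \to 0$.

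For the convergence, set $w := \mathcal{R}(w_0, 0, \gamma)$ and $r_n := w_n - w$. Subtracting the two equations and using the symmetric bilinearity of $\Gamma$ gives
\begin{equation*}
\dot r_n + A r_n + \Gamma(w_n + w, r_n) = D_n, \qquad r_n(0) = 0,
\end{equation*}
with $D_n := F_n - \gamma = \dot P_n$. Writing the Duhamel representation of $r_n$ in terms of the fundamental solution of the time-inhomogeneous linear operator $A + \Gamma(w_n + w,\,\cdot\,)$ (in the spirit of the proof of \Cref{proposition:perturbativestability}) and integrating by parts in time transfers the derivative from $P_n$ onto the propagator, producing an estimate of the form $\|r_n\|_{C([0,\tau];X)} \leqslant C \|P_n\|_{C([0,\tau];X)}$ which vanishes in the limit. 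The main obstacle is the bilinear contribution $\int_0^t \Gamma(w_n,\zeta_n)\,{\rm d}s$ to $P_n$: it does not go to zero from a naive pointwise bound, because $\zeta_n$ is only small in the relaxation norm and not in $L^1$. I would treat it by a slot-wise integration by parts, writing $w_n(s) = w_n(t_k) + \int_{t_k}^s \dot w_n(s')\,{\rm d}s'$ on each slot $[t_k, t_{k+1}]$ and pairing with the small primitive of $\zeta_n$. Making this rigorous requires a uniform $L^1$ bound on $\dot w_n$ and, simultaneously, the global existence of $w_n$ on $[0,\tau]$ (not covered by \Cref{proposition:perturbativestability} since $\|\zeta_n\|_{L^2}$ is bounded away from zero); both are obtained by a continuation/bootstrap argument based on the equation for $w_n$ and the smallness of $P_n$.
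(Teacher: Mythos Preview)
Your construction of $\zeta_n$ as fast $\pm\sqrt{p}\,\xi_i$-valued oscillations on a slotted partition is essentially the paper's choice (see \Cref{remark:general}, where $m=2p$ and $\zeta^i=\sqrt{p}\,\xi_i$, $\zeta^{i+p}=-\sqrt{p}\,\xi_i$), and so is the choice $\gamma_n\equiv\xi_0$. The difference lies entirely in the convergence analysis.

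You work directly with $w_n=\mathcal{R}(w_0,\zeta_n,\gamma_n)$ and run into the cross term $\int_0^t\Gamma(w_n,\zeta_n)\,{\rm d}s$, which couples the unknown $w_n$ to the oscillation. This forces the continuation/bootstrap you outline: provisional bounds on $w_n$ and $\dot w_n$, hence smallness of $P_n$, hence smallness of $r_n$, closing the loop. That argument can be carried out, but it is delicate and remains a sketch in your write-up.

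The paper sidesteps the bootstrap by working from the \emph{reference} trajectory $w_1=\mathcal{R}(w_0,0,\gamma)$ rather than from $w_n$. The quadratic identity lets one rewrite the equation satisfied by $w_1$ itself as
\[
\dot w_1 + A(w_1+\zeta_n) + f(w_1+\zeta_n) = \xi_0 + h_n,
\]
where $h_n$ (up to sign, your $D_n$ with $w_n$ replaced by $w_1$) depends only on \emph{known} quantities; the convergence $\|\mathscr{K}h_n\|_{C}\to 0$ is then a direct oscillatory-average computation (\Cref{lemma:convergenceKgn}). The decisive step is the shift $v_n:=w_1-\mathscr{K}h_n$, which satisfies $v_n=\mathcal{R}(w_0,\,\zeta_n+\mathscr{K}h_n,\,\xi_0)$ by construction, so global existence of this resolvent on $[0,\tau]$ is automatic. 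Now \Cref{proposition:perturbativestability} applies verbatim---one removes the small $L^2$-perturbation $\mathscr{K}h_n$ from the $\zeta$-argument---yielding existence and convergence of $\mathcal{R}(w_0,\zeta_n,\xi_0)$ with no bootstrap whatsoever.

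In short: both routes are viable, but the paper's device---placing the cross term against the fixed $w_1$ and absorbing its antiderivative into the $\zeta$-slot of an explicitly known trajectory---reduces everything to the perturbative stability result already in hand, whereas your route effectively re-proves a relaxation-norm variant of that stability through a continuation argument.
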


To prove the above result we need some notation and an elementary lemma. Given $n \in \mathbb{N}$, we denote by $\mathscr{E}_n\colon X \longrightarrow L^2((0,\tau); X)$ the operator which assigns to $\xi \in X$ the function
\begin{equation}\label{DEFEN}
(\mathscr{E}_n \xi)(t) \coloneq \zeta(nt/\tau) \boxnote{\qquad\qquad(t\in [0,\tau]),}
\end{equation}
where~$\zeta(t)$ denotes the~$1$-periodic continuation of $s \mapsto (\mathbbm{1}_{\left[0,1/2\right)}(s) - \mathbbm{1}_{\left[1/2,1\right)}(s))\xi$ and $\mathbbm{1}_I$ stands for the indicator function of an interval $I$.
Moreover, let $\mathscr{K}$ be the integral operator defined via
\begin{equation}\label{defkappa}
(\mathscr{K}z)(t) \coloneqq \int_0^t z(s) \, {\rm d}s
\end{equation}
for $z\in L^1([0,\tau]; X)$ and $t \in [0,\tau]$.

\begin{lemma}\label{lemma:convergenceKgn}
	Let $\phi \colon X \longrightarrow X$ be continuous. For $\xi \in X$ and $w_1 \in C([0,\tau]; X)$, define $\zeta_n \coloneq \mathscr{E}_n \xi$ and denote the sequence
	\begin{equation*}
		h_n\coloneqq \phi(w_1+\zeta_n)-\frac12\left( \phi(w_1+\xi)+\phi(w_1-\xi)\right) + A\zeta_n
	\end{equation*}
	for all $n \in \mathbb{N}$. Then,
	\[
		\displaystyle\lim_{n\to\infty}\|\mathscr{K} h_n\|_{C([0,\tau];X)} = 0.
	\]
\end{lemma}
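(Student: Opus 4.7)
\emph{Plan.} The strategy is to decompose $h_n = A\zeta_n + r_n$, where the nonlinear remainder is
\[
    r_n(t) \coloneqq \phi(w_1(t) + \zeta_n(t)) - \tfrac{1}{2}\bigl(\phi(w_1(t)+\xi) + \phi(w_1(t)-\xi)\bigr),
\]
and to estimate $\mathscr{K}(A\zeta_n)$ and $\mathscr{K}r_n$ separately in $C([0,\tau];X)$. Writing the square wave as $\zeta_n = s_n\,\xi$ for a scalar, $\tau/n$-periodic, mean-zero function $s_n \colon [0,\tau] \to \{+1,-1\}$, and setting $\psi_{\pm}(t) \coloneqq \phi(w_1(t) \pm \xi)$, the fact that $\zeta_n$ takes only the values $\pm \xi$ gives the factorisation $r_n = \tfrac{1}{2}(\psi_{+} - \psi_{-})\,s_n$. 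The task therefore reduces to showing that $\int_0^{\cdot} g(s)\,s_n(s)\,\mathrm{d}s \to 0$ uniformly on $[0,\tau]$ for any $g \in C([0,\tau];X)$, which is a scalar-wave averaging statement.

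For the linear contribution, since $s_n$ has mean zero over each full period of length $\tau/n$, one obtains directly $|\int_0^t \zeta_n(s)\,\mathrm{d}s| = |\xi|\,|\int_0^t s_n(s)\,\mathrm{d}s| \leqslant \tfrac{\tau}{2n}\,|\xi|$ for every $t \in [0,\tau]$, so $\|\mathscr{K}(A\zeta_n)\|_{C([0,\tau];X)} \leqslant \tfrac{\tau}{2n}\,\|A\|_{\mathcal{L}(X)}\,|\xi| \to 0$. For the nonlinear part, I would fix $t \in [0,\tau]$, write $t = k_n\,\tau/n + \rho_n$ with $k_n \in \{0,\dots,n\}$ and $\rho_n \in [0,\tau/n)$, and apply on each full period $[a, a+\tau/n]$ (with $a = (j-1)\tau/n$) the telescoping identity
\[
    \int_a^{a + \tau/n} g(s)\,s_n(s)\,\mathrm{d}s = \int_a^{a + \tau/(2n)} \bigl(g(s) - g(s + \tau/(2n))\bigr)\,\mathrm{d}s,
\]
obtained by a change of variables on the half-period where $s_n = -1$. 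Summing over the at most $n$ full periods contained in $[0,t]$ and denoting by $\omega_g$ the modulus of continuity on $[0,\tau]$ of $g \coloneqq \tfrac{1}{2}(\psi_{+} - \psi_{-}) \in C([0,\tau];X)$, one arrives at
\[
    \|\mathscr{K}r_n\|_{C([0,\tau];X)} \leqslant \tfrac{\tau}{2}\,\omega_g(\tau/(2n)) + \tfrac{\tau}{n}\,\|g\|_{C([0,\tau];X)},
\]
where the last term bounds the contribution of the partial period of length $\rho_n < \tau/n$, using $\|r_n\|_{L^{\infty}([0,\tau];X)} \leqslant \|g\|_{C([0,\tau];X)}$.

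Both bounds tend to zero: the first since $\psi_{\pm}$ are continuous on the compact interval and hence uniformly continuous, the second trivially. Combined with the linear estimate this delivers $\|\mathscr{K}h_n\|_{C([0,\tau];X)} \to 0$, as required. The main technical obstacle is really only the bookkeeping of the partial-period remainder at an arbitrary $t$ not aligned with the grid $\{k\tau/n\}_{k=0}^{n}$; the factorisation $r_n = g\,s_n$ reduces the analysis to the scalar square wave and isolates this into a single $O(1/n)$ term handled by uniform continuity.
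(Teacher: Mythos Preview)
Your proof is correct and takes a genuinely different route from the paper. The key observation you make---that since $\zeta_n$ takes only the two values $\pm\xi$, the nonlinear remainder factors as $r_n = g\,s_n$ with $g=\tfrac12(\psi_+-\psi_-)\in C([0,\tau];X)$---reduces everything to a single scalar oscillatory integral and yields an explicit rate $\|\mathscr{K}h_n\|_{C}\lesssim \omega_g(\tau/(2n))+1/n$. This is more elementary and more quantitative than the paper's argument: there, one first approximates $w_1$ by piecewise constant functions, then for such $w_1$ observes that $\{h_n(t)\}$ lies in a finite set, invokes Arzel\`a--Ascoli to get relative compactness of $\{\mathscr{K}h_n\}$, and finally checks pointwise convergence on the full-period grid plus a vanishing tail. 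Your factorisation avoids both the reduction step and the compactness argument.

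What the paper's approach buys, on the other hand, is that it transfers verbatim to the more general setting of Remark~5.2, where $\zeta_n$ cycles through $m>2$ values $\zeta^1,\dots,\zeta^m$ and the analogue of $r_n$ no longer factors as a continuous function times a single scalar wave; the compactness-plus-pointwise argument is insensitive to this. Your method could be adapted there too (writing $r_n$ as a sum of terms $g_j\,\mathbbm{1}_{\{\zeta_n=\zeta^j\}}$ and handling each via the same half-period shift), but the bookkeeping is heavier.
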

\begin{proof}
	We first note that the general statement follows by means of an approximation argument from the case where the function $w_1 \colon [0,\tau]\to X$ is piecewise constant. Next, we note that
	the family $\{\mathscr{K} h_n\}_{n\in \mathbb{N}}$ is relatively compact in~$C([0,\tau];X)$ for piecewise constant choices of $w_1$. Indeed, $\{h_n(t)\}_{t \in [0,\tau]}$ is contained in a finite set,
	not depending on $n$.     
  	Thus, there exists a
	compact set $G \subset X$ such that
	$(\mathscr{K} h_n)(t) \in G$ for all $t \in [0,\tau]$ and $n \in \mathbb{N}$.
	Moreover, because of $\sup_{n \in \mathbb{N}}\| h_n\|_{C([0,\tau]; X)} < \infty$, the family $\{\mathscr{K} h_n\}_{n\in \mathbb{N}}$ is uniformly equicontinuous on $[0,\tau]$; hence, $\{\mathscr{K} h_n\}_{n\in \mathbb{N}}$ is
	relatively compact in $C([0,\tau]; X)$ by the Arzel\`{a}--Ascoli theorem. Therefore, it suffices to establish for all $t \in [0,\tau]$ the convergence
	\begin{equation}\label{equation:limitKgn}
		\|\mathscr{K} h_n(t)\|_{X} \to 0 \, \, \text{as} \,\,
		n\rightarrow \infty.
	\end{equation}
	
	In a first step, the convergence in \eqref{equation:limitKgn} is demonstrated for the case where $w_1 \equiv b$ with a constant $b \in X$. Hereto, any fixed $t \in [0,\tau]$ is decomposed for each $n \in \mathbb{N}$ as $t = t_{l, n} + \tau_n$, where
	\begin{equation}\label{equation:reppar}
		t_{l,n} = \frac{lT}{n}, \quad l = l_{t,n} \in \mathbb{N}, \quad \tau_n\in \left[0,\frac{T}{n}\right).
	\end{equation}
	Due to the definition of $(\zeta_n)_{n \in \mathbb{N}}$ and using \eqref{DEFEN} it follows that
	\begin{gather*}
		\int_0^{\frac{lT}{n}} A\zeta_n(s) \, {\rm d}s = A \int_0^{\frac{lT}{n}} \zeta_n(s) \, {\rm d}s = 0,\\
		\int_0^{\frac{lT}{n}} \phi(b +
		\zeta_n(s)) \, {\rm d}s = \frac{lT}{2n}\left(\phi(b+\xi)+ \phi(b-\xi)\right).
	\end{gather*}
	Consequently,
	\begin{align*}
		\int_0^{\frac{lT}{n}} h_n (s) \, {\rm d}s
		& = \int_0^{\frac{lT}{n}} \left[\phi(b +
		\zeta_n(s)) + 
		A\zeta_n(s)\right] \, {\rm d}s \\ 
		& \quad -\frac{lT}{2n}\left(\phi(b+\xi)+ \phi(b-\xi)\right) \\
		& = 0,
	\end{align*}
	which yields
	\begin{equation*}
		\mathscr{K} h_n(t) = \int_0^{\tau_n}
		\left[\phi(b + \zeta_n(s)) + A\zeta_n(s)\right] \, {\rm d}s -\frac{\tau_n}{2} \left(  \phi (b+\xi) + \phi(b-\xi)\right).
	\end{equation*}
	Since $\tau_n \to 0 $ as $n \to \infty$, one can first conclude the convergence \eqref{equation:limitKgn} for constant~$w_1$ and then by a similar analysis obtain
	\eqref{equation:limitKgn} also for the case where $w_1$ is piecewise constant.
\end{proof}

We are now in a position to prove the main result of this section. 

	\begin{proof}[Proof of \Cref{proposition_pwc}]
		It can be assumed that~$\gamma\in X$ is a constant function of time. Otherwise, one may apply the strategy below on each interval of constancy and then use \Cref{proposition:perturbativestability} for obtaining smooth controls. Moreover, we simplify the presentation by first making the additional assumption that there exist elements $\eta, \xi \in E_{l-1}$ such that
		\begin{equation}\label{equation:gammauxi}
			\gamma = \eta - f(\xi).
		\end{equation}
		The general situation is treated almost analogously, as discussed in \Cref{remark:general} below. 
		
		Taking the quadratic character \eqref{equation:qc} into account, one obtains for each $w \in X$ the representation
		\[
		f(w)-\gamma=\frac12 \left( f(w+\xi)+f(w-\xi)\right)- \eta.
		\]
		Thus, the trajectory $w_1 \coloneqq {\mathcal{R}}(w_0,0,\gamma)\in C([0,\tau];X)$
		satisfies 
		\begin{equation}\label{equation:w1}
			\dot{w}_1 + Aw_1 + \frac12\left(f(w_1+\xi)+f(w_1-\xi)\right) = \eta.
		\end{equation}
		Then, we introduce for each $n \in \mathbb{N}$ the function $\zeta_n(t) \coloneq \mathscr{E}_n \xi$,
        where $\mathscr{E}_n$ has been introduced in \eqref{DEFEN}.
		In particular, the equation \eqref{equation:w1} is equivalent to  
		\begin{equation}\label{equation:w1e}
			\dot w_1 + A(w_1+\zeta_n) + f(w_1 + \zeta_n) = \eta
			+ g_n,
		\end{equation}
		where
		\begin{equation}\label{equation:gn}
			g_n \coloneqq f(w_1+\zeta_n)-\frac12\left( f(w_1+\xi)+f(w_1-\xi)\right) + A\zeta_n.
		\end{equation}
		The functions $(g_n)_{n \in \mathbb{N}}$ in the right-hand side of \eqref{equation:w1e} are hereby undesired, as they may assume values not contained in $E_{l-1}$. This issue will be avoided by using \Cref{lemma:convergenceKgn}.

		We consider now for each $n \in \mathbb{N}$ the transformed trajectory $v_n \coloneqq w_1-\mathscr{K}g_n$,
        with $\mathscr{K}$ introduced in \eqref{defkappa}. Then, we have that $v_n$ satisfies for all $n \in \mathbb{N}$ the system
		\begin{equation*}\label{eq_vn}
				\dot v_n + A(v_n + \zeta_n + \mathscr{K} g_n)  + f (v_n + \zeta_n + \mathscr{K} g_n) = \eta
		\end{equation*}
		with initial value $v_n(0) = w_0$, so that
		\begin{equation}\label{equation:vnR}
			v_n = \mathcal{R}(w_0,\zeta_n+\mathscr{K}g_n, \eta).
		\end{equation}
		In \eqref{equation:vnR}, the right-hand side is well-defined for each $n \in \mathbb{N}$, noting that the left-hand side equals $w_1-\mathscr{K}g_n$. Resorting to \Cref{lemma:convergenceKgn}, it follows that
		\begin{equation}\label{equation:supvn}
			\sup_{n\in \mathbb{N}} \|v_n \|_{C([0,\tau];X)} < \infty.
		\end{equation}
		Moreover, the definition of $\zeta_n$ implies that  $\zeta_n(t)\in \{\xi,-\xi\}$ for every $n\in\mathbb{N}$ and $t\in [0,\tau]$. Consequently, 
		\begin{equation}\label{equation:finitenormzeta}
			\sup_{n \in \mathbb{N}} \| \zeta_n\|_{L^\infty([0,\tau];E_{l-1})} < \infty.
		\end{equation}
		Hence, by \eqref{equation:supvn}, \eqref{equation:finitenormzeta}, \Cref{proposition:perturbativestability}, and~\Cref{lemma:convergenceKgn} we know that the solution ${\mathcal{R}}(w_0,\zeta_n,\eta)$ is well-defined for sufficiently large $n$, and it holds that 
		\begin{equation}\label{equation:vnl}
			\|{\mathcal{R}}(w_0,\zeta_n,\eta)-v_n \|_{C([0,\tau];X)}\rightarrow 0 \, \, \text{as} \,\,
			n\rightarrow \infty.
		\end{equation}     
		As a result, it follows that
		\begin{equation*}\label{eq:Step1_conclusion}
			\begin{aligned}
				\|{\mathcal{R}}(w_0,\zeta_n,\eta)-w_1 \|_{C([0,\tau];X)} \longrightarrow 0 \, \, \text{as} \,\,
				n\rightarrow \infty,
			\end{aligned}
		\end{equation*}
		where we also used \eqref{equation:vnl}, \Cref{lemma:convergenceKgn}, and the fact that $v_n = w_1-\mathscr{K}g_n$ for all $n \in \mathbb{N}$.
		
		To conclude the proof of \Cref{proposition_pwc}, remains to show the convergence in relaxation metric
		\begin{equation}\label{equation:auxrellim}
			||| \zeta_ n |||_{\tau} \to 0 \, \, \text{ as } n \to \infty.
		\end{equation} 
		This is done by analysis similar to the proof of \Cref{lemma:convergenceKgn}. In fact, it suffices to verify that $\{\mathscr{K}\zeta_ n\}_{n\in \mathbb{N}}$ is relatively compact in $C([0,\tau]; X)$ and that $\|(\mathscr{K}\zeta_n)(t)\|_{X}\to 0$ as $n\to\infty$ for any $t\in [0,\tau]$. To this end, we remark that the set $\{\zeta_n(t) \, | \, t \in [0,\tau], n \in \mathbb{N}\}$ is contained in a finite
		subset of~$X$ independent of $n$, implying that $(\mathscr{K}\zeta_n)(t)$ belongs for each $t \in [0,\tau]$ and $n \in \mathbb{N}$ to a fixed compact set in~$X$.
		In particular, one can infer from~\eqref{equation:finitenormzeta} that $(\mathscr{K}\zeta_n)_{n\in \mathbb{N}}$ is uniformly equicontinuous on $[0,\tau]$ such that $\{K\zeta_n\}_{n\in \mathbb{N}}$ is
		relatively compact in~$C([0,\tau]; X)$ by the Arzel\`{a}--Ascoli theorem. 
		Representing again any $t \in [0,\tau]$ as $t=t_{l,n}+\tau_n$, where $l,n, \tau_n$ are as in \eqref{equation:reppar}, and by using the fact that $(\mathscr{K}\zeta _n)(lT/n)=0$, we obtain via \eqref{equation:finitenormzeta} the limit $\|(\mathscr{K}\zeta_n)(t)\|_{X}\to 0$ as $n\to\infty$. This completes the proof of the limit~\eqref{equation:auxrellim}.

		Finally, due to \Cref{proposition:perturbativestability}, the sequence $(\zeta_n)_{n \in \mathbb{N}}$ can be replaced by a sequence of smooth functions.
	\end{proof}

	\begin{remark}\label{remark:general}
		We explain how to obtain the general assertion of \Cref{proposition_pwc}, where instead of \eqref{equation:gammauxi} one only knows from the definition of $E_l$ that $\gamma = \xi_0 - \sum_{i=1}^p f(\xi_i)$
		for some $p \in \mathbb{N}$ and $\xi_0, \xi_1, \dots, \xi_p \in E_{l-1}$. Given any $w \in E_l$, and defining $m \coloneq 2p$, it then holds
		\[
		f(w) - \gamma = \frac{1}{m} \sum_{i = 1}^m f(w+\zeta^i) - \xi_0,
		\]
		where $\zeta^i = (m/2)^{1/2} \xi_i$ and $\zeta^{i+p} = -(m/2)^{1/2} \xi_i$ for $i \in \{1, \dots, p\}$. In contrast to the argument given above when \eqref{equation:gammauxi} is known, the sequence $(\zeta_n)_{n \in \mathbb{N}}$ is now defined by fixing the $1$-periodic function $\zeta \colon \mathbb{R} \longrightarrow E_{l-1}$ such that $\zeta(t) = \zeta^i$ for $t \in [(i-1)/m, i/m]$ and $i \in \{1, \dots, m\}$, followed by setting $\zeta_n(t) = \zeta(nt/\tau)$ for each $n \in \mathbb{N}$. Then, the trajectory $w_1 \coloneqq {\mathcal{R}}(w_0,0,\gamma)\in C([0,\tau];X)$ satisfies the problem
		\[
			\dot{w}_1 + A(w_1 + \zeta_n) + f(w_1 + \zeta_n) = \xi_0 + h_n,
		\]
		where
		\begin{equation}\label{equation:gn2}
			h_n(t) \coloneqq f(w_1+\zeta_n)-\frac{1}{m} \sum_{i=1}^m f(w_1+\zeta^i) + AB\zeta_n.
		\end{equation}
		Considering $h_n$ defined in \eqref{equation:gn2} instead of $g_n$ as given in \eqref{equation:gn}, the formulation and proof of \Cref{lemma:convergenceKgn} immediately extends to this situation, and one can follow with minor adaptations the argument given above. 
	\end{remark}

	\section{Proof of the main result}\label{subsection:pa1}
    
	We are now in the position to complete the proof of \Cref{theorem:approximatetracking}. Hereto, attention is paid first to the simplified situation of \Cref{theorem:approximatetracking_simp}, where \Cref{assumption:1} holds, as this emphasizes the core argument. 
	
	\begin{proof}[Proof of \Cref{theorem:approximatetracking} and of \Cref{theorem:approximatetracking_simp}]
		In a {\em first step}, we suppose \Cref{assumption:1}, while denoting the spaces $E_1 = \operatorname{Range}(B)$ and $E_2 = X$ in order to be consistent with the notation used in \Cref{assumption:2}. That is, we first prove  \Cref{theorem:approximatetracking_simp}.
        
        Let $\psi \in W^{1,2}((0,\tau); X)$ be the target trajectory, set $w_0 \coloneq \psi(0)$, and define an associated control by $\gamma \coloneq \dot{\psi} + A\psi + f(\psi) \in L^2((0,\tau); X)$. Then, one has $\psi = \mathscr{R}(w_0, 0, \gamma)$.
		As the goal is to approximate $\psi$, and in view of \Cref{proposition:perturbativestability}, we can assume without loss of generality that $\gamma$ is piecewise constant. Now, we associate with $\psi$ and $\gamma$ a sequence $(\zeta_n, \gamma_n)_{n\in\mathbb{N}}\subset C^{\infty}([0,\tau];E_{1}\times E_{1})$ in the sense of \Cref{proposition_pwc}.
        Again, due to \Cref{proposition:perturbativestability} and an approximation argument, there is a sequence $(\widetilde{\zeta}_n)_{n\in\mathbb{N}} \subset C^{\infty}_0((0,\tau);E_1)$, which is uniformly bounded in $C([0,\tau]; E_1)$, such that
		\begin{equation}\label{equation:zetanzetantildediff}
			\begin{gathered}
				\lim\limits_{n \to \infty} \|\zeta_n - \widetilde{\zeta}_n\|_{L^2((0,\tau); E_1)} = 0,\\
				\lim\limits_{n \to \infty} \|\mathscr{R}(w_0, \zeta_n, \gamma_n) - \mathscr{R}(w_0, \widetilde{\zeta}_n, \gamma_n)\|_{C([0,\tau]; X)} = 0.
			\end{gathered}
		\end{equation}
		Noting that the sequence $(\zeta_n, \gamma_n)_{n\in\mathbb{N}}$ is obtained via \Cref{proposition_pwc}, we get
		\begin{equation}\label{equation:relaxationnormzetantilde}
			|||\widetilde{\zeta}_n|||_{\tau} \leq \int_0^\tau |\zeta_n(s) - \widetilde{\zeta}_n(s)| \, {\rm d}s + ||| \zeta_n |||_{\tau} \longrightarrow 0
		\end{equation}
		as $n \longrightarrow \infty$. Now, we decompose
		\begin{equation}\label{equation:decomposeRzetanzetantilde}
			\mathscr{R}(w_0, \widetilde{\zeta}_n, \gamma_n) = \mathscr{R}(w_0, 0, \widetilde{\gamma}_n) - \widetilde{\zeta}_n,
		\end{equation}
		where $\widetilde{\gamma}_n \coloneq \gamma_n + {\rm d} \widetilde{\zeta}_n/{{\rm d}t}$.
		In particular,
		\begin{equation}\label{equation:zetanzetantildeRendpoint}
			\mathscr{R}_{\tau}(w_0, \widetilde{\zeta}_n, \gamma_n) = \mathscr{R}_{\tau}(w_0, 0, \widetilde{\gamma}_n)
		\end{equation}
		by the choice of $(\widetilde{\zeta}_n)_{n\in\mathbb{N}}$. Then, thanks to \Cref{proposition_pwc}, \eqref{equation:zetanzetantildediff}, and \eqref{equation:zetanzetantildeRendpoint}, it follows that $\mathscr{R}_{\tau}(w_0, 0, \widetilde{\gamma}_n) \longrightarrow \mathscr{R}_{\tau}(w_0, 0, \gamma)$ as $n \longrightarrow \infty$. Further, due to \Cref{proposition_pwc} and \cref{equation:zetanzetantildediff,equation:relaxationnormzetantilde,equation:decomposeRzetanzetantilde,equation:zetanzetantildeRendpoint}, the norm $|||\mathscr{R}(w_0, 0, \gamma) -  \mathscr{R}(w_0, 0, \widetilde{\gamma}_n) |||_{\tau}$ is for each $n \in \mathbb{N}$ bounded from above by
		\begin{equation*}
			\begin{multlined}
				|||\mathscr{R}(w_0, \widetilde{\zeta}_n, \gamma_n) - \mathscr{R}(w_0, 0, \widetilde{\gamma}_n)|||_{\tau} 
				+  \tau \|\mathscr{R}(w_0, 0, \gamma) - \mathscr{R}(w_0, \zeta_n, \gamma_n)\|_{C([0,\tau]; X)}  \\
			 	+ \tau \|\mathscr{R}(w_0, \zeta_n, \gamma_n) - \mathscr{R}(w_0, \widetilde{\zeta}_n, \gamma_n)\|_{C([0,\tau];X)}.
			\end{multlined}
		\end{equation*}
		As a result, we obtain the limit
		\[
			||| \mathscr{R}(w_0, 0, \gamma) -  \mathscr{R}(w_0, 0, \widetilde{\gamma}_n) |||_{\tau} \longrightarrow 0 \mbox{ as } n \longrightarrow \infty,
		\]
		which implies that the desired control $u \in C^{\infty}([0,\tau]; U)$ can for sufficiently large $\widetilde{N} \in \mathbb{N}$ be chosen of the form $u = \widetilde{\gamma}_{\widetilde{N}}$
	
		The {\em second step} is to consider the general situation where \Cref{assumption:1} is replaced by \Cref{assumption:2}. Hereto, we denote by $n_X$ the number introduced in \Cref{assumption:2}. Next, let $\varepsilon > 0$, $\psi \in W^{1,2}((0,\tau); X)$, and $w_0 \coloneq \psi(0)$. Then, define the auxiliary control $\gamma \coloneq A\psi + \dot{\psi} + f(\psi)$ and take $\varepsilon_1, \dots, \varepsilon_{n_X} \in (0,\infty)$ such that $\sum_{l=1}^{n_X} \varepsilon_l < \varepsilon$. In particular, it holds that $\psi = \mathscr{R}(w_0, 0, \gamma)$. Now, the same idea as described above under \Cref{assumption:1} is utilized to obtain a suitable $E_{n_X-1}$-valued control $\widetilde{\gamma}_{1}$ so that
		\[
			||| \mathscr{R}(w_0, 0, \gamma) -  \mathscr{R}(w_0, 0, \widetilde{\gamma}_{1}) |||_{\tau} < \varepsilon_1.
		\]
		If necessary, this argument can be repeated to obtain a suitable $E_{n_X-2}$-valued control $\widetilde{\gamma}_{2}$ such that
		\[
			||| \mathscr{R}(w_0, 0, \widetilde{\gamma}_{1}) -  \mathscr{R}(w_0, 0, 	\widetilde{\gamma}_{2}) |||_{\tau} < \varepsilon_2.
		\]
		After iterating this procedure a finite number of time, one arrives at an $E_1$-valued control $\widetilde{\gamma}$ with $||| \mathscr{R}(w_0, 0, \gamma) -  \mathscr{R}(w_0, 0, \widetilde{\gamma}) |||_{\tau} < \varepsilon$.
	\end{proof}

	\section{Applications to the control of coupled systems}\label{section:dc}
    
	In this section, we apply \Cref{theorem:approximatetracking}  to the study of controllability problems for systems which couple \eqref{equation:main_system}
    with an ODE system where the state $x$ in \eqref{equation:main_system} plays the role of a coefficient. More precisely, 
    let $Z$ be another finite-dimensional inner product space, let $\widetilde{\Gamma}\colon Z \times X \longrightarrow Z$ be bilinear, and let $F\colon Z \to Z$ be locally Lipschitz continuous.
    Then, using the notations from \eqref{equation:main_system}, we consider the coupled system
	\begin{equation}\label{equation:coupled}
		\begin{cases}
			\dot{x}(t) + A x(t) + f(x(t)) = Bu(t) \boxnote{\qquad\qquad(t\geqslant 0),} \\
			\dot{z}(t) + \tilde\Gamma(z(t), x(t)) + F(z(t)) = 0 \boxnote{\qquad \quad \, (t\geqslant 0),}\\
			x(0)=x_0\in X, z(0) = z_0 \in  Z,
		\end{cases} 
	\end{equation}
    where the function $u \colon [0,\infty) \longrightarrow U$ is the control, $A \in \mathcal{L}(X)$ and $B \in \mathcal{L}(U, X)$ are linear maps from $X$ to $X$ and from $U$ to $X$, respectively, and $f\colon X\longrightarrow X$ is of the form $f(x)=\Gamma(x,x)$ for a bilinear function $\Gamma \colon X\times X\to X$.

	The following theorem provides sufficient conditions for the approximate tracking controllability of~$z$ with respect to the uniform norm, the target being any fixed trajectory of the second equation in \eqref{equation:coupled}, via a control acting in the equation for~$x$. Simultaneously, it provides the approximate controllability of~the final state~$x(\tau)$. More precisely, we have: 
	
	\begin{theorem}\label{theorem:application}
		Let $\tau > 0$. Assume that $(\overline{z}, \overline{x}) \in W^{1,2}((0,\tau); Z)\times C([0,\tau]; X)$ with initial values $(\overline{z}(0), \overline{x}(0)) = (z_0, x_0)$ satisfy on the time interval $[0,\tau]$ the equation
		\begin{equation}\label{equation:reference}
			\begin{gathered}
				\dot{\overline{z}}(t)  + \widetilde{\Gamma}(\overline{z}(t), \overline{x}(t)) + F(\overline{z}(t)) = 0.
			\end{gathered}
		\end{equation}
		Moreover, suppose that $f, B$, and the control space $U$ in \eqref{equation:coupled} satisfy \Cref{assumption:2}. Then, for any $\varepsilon > 0$, there exists a control $u \in C^{\infty}([0,\tau]; U)$ such that the solution to \eqref{equation:coupled} obeys
		\[
			\sup\limits_{t \in [0,\tau]} |z(t) - \overline{z}(t)| + |x(\tau) - \overline{x}(\tau)| + |||x - \overline{x}|||_{\tau} < \varepsilon.
		\]
	\end{theorem}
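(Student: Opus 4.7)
The plan is to decouple \eqref{equation:coupled}: first use \Cref{theorem:approximatetracking} on the $x$-equation in isolation, and then transfer the resulting closeness of $x$ to $\overline{x}$ (in the relaxation norm and at the final time) into uniform-norm closeness of $z$ to $\overline{z}$. The crucial observation, and the reason the relaxation norm really matters here, is that $z$ is coupled to $x$ through the bilinear form $\widetilde{\Gamma}(z,x)$: an integration by parts on the term $\widetilde{\Gamma}(\overline{z}, x - \overline{x})$ will allow us to trade weak-norm smallness of $x - \overline{x}$ for strong-norm smallness of $z - \overline{z}$.

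Given $\varepsilon > 0$, I would first fix a small parameter $\delta \in (0,\varepsilon)$, to be tuned later, and apply \Cref{theorem:approximatetracking} to the target $\overline{x}$ to obtain $u \in C^{\infty}([0,\tau]; U)$ so that the solution $x$ to the first equation of \eqref{equation:coupled} satisfies $|x(\tau) - \overline{x}(\tau)| + |||x - \overline{x}|||_{\tau} < \delta$. Inspection of the construction in \Cref{subsection:pa1} shows moreover that $x$ can be arranged with $\|x\|_{C([0,\tau]; X)} \leqslant M$, where $M$ depends only on $\overline{x}$ and on $A, f, B$, and not on $\delta$ (the controlled trajectory is of the form $\overline{x}$ plus a uniformly bounded oscillatory perturbation plus a vanishing uniform error). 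With this $x$ frozen, the second equation of \eqref{equation:coupled} becomes a non-autonomous Cauchy problem for $z$, locally solvable by the Lipschitz hypotheses on $F$ and $\widetilde{\Gamma}(\cdot, x)$; its extension to all of $[0,\tau]$ will be a byproduct of the a priori estimate below. Setting $\rho \coloneqq z - \overline{z}$, subtracting \eqref{equation:reference} from the second equation of \eqref{equation:coupled} and using the bilinearity of $\widetilde{\Gamma}$ yields
\begin{equation*}
\dot{\rho}(t) + \widetilde{\Gamma}(\rho(t), x(t)) + \widetilde{\Gamma}(\overline{z}(t), x(t) - \overline{x}(t)) + F(z(t)) - F(\overline{z}(t)) = 0, \quad \rho(0) = 0.
\end{equation*}

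The technical heart is to estimate $\int_0^t \widetilde{\Gamma}(\overline{z}(s), x(s) - \overline{x}(s))\,{\rm d}s$ using only the relaxation-norm smallness of $x - \overline{x}$. Introducing the primitive $\eta(s) \coloneqq \int_0^s (x(\sigma) - \overline{x}(\sigma))\,{\rm d}\sigma$, which satisfies $\|\eta\|_{C([0,\tau]; X)} \leqslant |||x - \overline{x}|||_{\tau} < \delta$, integration by parts gives
\begin{equation*}
\int_0^t \widetilde{\Gamma}(\overline{z}(s), x(s) - \overline{x}(s))\,{\rm d}s = \widetilde{\Gamma}(\overline{z}(t), \eta(t)) - \int_0^t \widetilde{\Gamma}(\dot{\overline{z}}(s), \eta(s))\,{\rm d}s,
\end{equation*}
whose modulus is bounded by $C_1 \delta$, with $C_1$ depending only on $\|\widetilde{\Gamma}\|$, $\|\overline{z}\|_{C([0,\tau]; Z)}$, and $\|\dot{\overline{z}}\|_{L^2((0,\tau); Z)}$.

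Combining this with $\|x\|_{C([0,\tau]; X)} \leqslant M$ and the local Lipschitz continuity of $F$ on a fixed bounded neighbourhood $\mathcal{N}$ of $\overline{z}([0,\tau])$, an integration of the $\rho$-equation leads to $|\rho(t)| \leqslant C_1 \delta + C_2 \int_0^t |\rho(s)|\,{\rm d}s$, valid as long as $z(s) \in \mathcal{N}$ for $s \in [0,t]$, with $C_2$ independent of $\delta$. Grönwall's lemma then gives $\|\rho\|_{C([0,\tau]; Z)} \leqslant C_1 e^{C_2 \tau} \delta$, and a standard continuation/bootstrap argument ensures that for $\delta$ small enough $z$ never leaves $\mathcal{N}$ and is therefore defined on the whole of $[0,\tau]$. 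Choosing $\delta$ so that $(1 + C_1 e^{C_2 \tau}) \delta < \varepsilon$ then delivers the three claimed bounds simultaneously. I expect the main obstacle to be precisely the integration-by-parts step: without the relaxation-norm gauge on $x - \overline{x}$ together with the bilinearity of the coupling $\widetilde{\Gamma}$, transferring a weak-topology estimate on $x - \overline{x}$ into a uniform estimate on $z - \overline{z}$ does not appear feasible.
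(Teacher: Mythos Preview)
Your approach is correct and mirrors the paper's two-step strategy: invoke \Cref{theorem:approximatetracking} together with the uniform $C([0,\tau];X)$-bound on the controlled trajectory (which the paper likewise extracts from \eqref{equation:decomposeRzetanzetantilde}), and then transfer relaxation-norm closeness of $x-\overline x$ into uniform closeness of $z-\overline z$ via integration by parts on the bilinear coupling term $\widetilde\Gamma(\overline z,x-\overline x)$, followed by Gr\"onwall and a continuation argument. The one substantive difference is in how the $z$-estimate is carried out. The paper packages this step as a standalone stability result (\Cref{lemma:app}) proved by an energy argument: one multiplies the difference equation by $\widetilde z-\widehat z$ and integrates, so the integration by parts generates terms involving $\dot z$ as well as $\dot{\overline z}$, and the final bound comes out of order $|||x-\overline x|||_\tau^{1/2}$. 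You instead integrate the difference equation itself and take norms, so your integration by parts involves only the known quantity $\dot{\overline z}\in L^2$ and delivers a bound linear in $|||x-\overline x|||_\tau$. Your variant is therefore slightly more elementary (no need to track $\dot z$ through the bootstrap) and yields a sharper exponent; the paper's formulation has the modest advantage of being stated as a reusable perturbation lemma independent of the control problem.
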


The proof of the above result provides a finite-dimensional illustration of a mechanism, exploited recently in \cite{KoikeNersesyanRisselTucsnak2025} (in a nonlinear PDE context) for the study of relaxation enhancement of concentrations immersed in controlled fluids, and in \cite{Ners-2015} for the Lagrangian controllability of the Navier--Stokes system. For the here considered finite-dimensional setup, we extend these arguments further; namely, here we allow $F$ to be a general locally Lipschitz function instead of requiring it to be linear. Before giving the proof of \Cref{theorem:application}, we state some remarks and examples.
	
	\begin{remark}[Dynamic control]\label{example:dc}
The system described by the first equation in \eqref{equation:coupled} can be seen as a dynamic controller of the system
     (with state~$z$) described by the second one. This means that if the system (with state trajectory~$z$ and input signal~$x$) described by the second equation in \eqref{equation:coupled} is approximately controllable in time~$\tau$, then  \Cref{theorem:application} implies that this system can be controlled using a dynamic controller described by the first equation in \eqref{theorem:application}. More precisely, for every $z_1\in Z$ and $\varepsilon>0$ there exists $u\in L^2([0,\tau];U)$ such that the solution $(x,z)$ to \eqref{equation:coupled} satisfies $\|z(\tau)-z_1\|_Z < \varepsilon$, and one has even the stronger statement that $u$ can be chosen such that $\sup_{t\in[0,\tau]}\|z(t)- \overline{z}(t)\|_Z < \varepsilon$, where $(\overline{z}, \overline{x})$ is any sufficiently regular pair satisfying \eqref{equation:reference} with $\overline{z}(0) = z_0$ and $\|\overline{z}(\tau)-z_1\|_Z < \varepsilon$. Further, it is possible to simultaneously drive $x$ approximately to any prescribed target state at the final time $\tau$.
	\end{remark}  

	\begin{example}[Dynamic motion planning]\label{example:dmp}
		When $X=Z=\mathbb{R}^d$, $d \geq 1$, and the bilinear function $\widetilde{\Gamma}$ in the second equation of \eqref{equation:coupled}
        is $\widetilde{\Gamma}(a,b) \coloneq [a_1b_1, a_2b_2,\dots,a_db_d]^{\top}$ for $a,b \in \mathbb{R}^d$, we can maintain the trajectory of $z$ in the uniform norm close to a given curve by using the dynamic controller $x$. In this example, the aim is to track with $z$ any prescribed reference curve $z_{\operatorname{ref}} \in C([0,\tau]; Z)$ that stays within an orthant (the components of $z_{\operatorname{ref}}$ do not change sign),  
        More precisely, for any $\tau > 0$ we consider the coupled system 
		\begin{equation}\label{equation:coupled2ex}
			\begin{cases}
				\dot{x}(t) + A x(t) + f(x(t)) = Bu(t),  \boxnote{\qquad\qquad(t\geqslant 0),}\\
				\dot{z}(t) + \widetilde{\Gamma}(z(t),x(t)) + F(z(t)) = 0, \boxnote{\qquad \,\,\,\,\,\, \,(t\geqslant 0),}\\
				(x,z)(0) = (x_0, z_0) \in X\times Z,
			\end{cases}
		\end{equation}
		where $f, B$, and the control space $U$ satisfy \Cref{assumption:2}, and $F\colon Z \to Z$ is locally Lipschitz continuous.
		Given $\varepsilon,\ \tau > 0$, we fix a smooth curve $\overline{z} = [\overline{z}_1, \overline{z}_2, \dots, \overline{z}_d]^{\top}\colon[0,\tau]\longrightarrow\mathbb{R}^d$ such that $\Pi_{i=1}^d\overline{z}_i(r) \neq 0$ for all $r \in [0,\tau]$ and $\sup_{t \in [0,\tau]} |z_{\operatorname{ref}}(t) - \overline{z}(t)| < \varepsilon/2$. Then, for each $t \in [0,\tau]$ and $l \in \{1,2,\dots,d\}$, we define
		\[
			\overline{x}_l(t) \coloneq \frac{F_l(\overline{z}(t)) - \dot{\overline{z}}_l(t)}{\overline{z}_l(t)}.
		\]
		As a result, $(\overline{z}, \overline{x})$ together with $f, B$, and the control space $U$, satisfy the assumptions of \Cref{theorem:application}. Now, appealing to \Cref{theorem:application} with $\varepsilon$ replaced by $\varepsilon/2$, there exists a control $u \in L^2((0,\tau); U)$ such that the solution to \eqref{equation:coupled2ex} obeys
		\[
			\sup\limits_{t \in [0,\tau]} |z(t) - z_{\operatorname{ref}}(t)| \leq \sup\limits_{t \in [0,\tau]} \left(|z(t) - \overline{z}(t)| + |z_{\operatorname{ref}}(t) - \overline{z}(t)|\right) < \varepsilon.
		\]
	\end{example}

	The proof of \Cref{theorem:application} is a consequence of \Cref{theorem:approximatetracking}, providing controlled trajectories bounded in $C([0,\tau]; X)$ independently of the appearing parameter $\varepsilon$ (see \eqref{equation:decomposeRzetanzetantilde}), and the following adaptation of \cite[Theorem 3.2]{KoikeNersesyanRisselTucsnak2025}, which we generalize here further to allow the presence of any locally Lipschitz continuous $F$.

	\begin{lemma}\label{lemma:app}
		Let $\tau > 0$ and for any $z_0 \in Z$ and $\widetilde{x} \in C([0,\tau]; X)$ assume that on the time interval $[0, \tau]$ there exists a unique solution $\widetilde{z}$ to
		\begin{gather*}
			\dot{\widetilde{z}}(t) + \widetilde{\Gamma}(\widetilde{z}(t), \widetilde{x}(t)) + F(\widetilde{z}(t)) = 0, \\ 
			\widetilde{z}(0) = z_0.
		\end{gather*}
		Moreover, assume that $\widehat{x} \in C([0,\tau]; X)$ and that there is a constant $R > 0$ with
		\[
			\|\widetilde{x}\|_{C([0,\tau]; X)} + \|\widehat{x}\|_{C([0,\tau]; X)} \leq R.
		\]
		Then, if $\widehat{x}$ satisfies $|||\widehat{x} - \widetilde{x}|||_{\tau} < \delta$ for $\delta > 0$ sufficiently small, there exists a unique solution $\widehat{z}$ on the time interval $[0, \tau]$ to
		\begin{gather*}
			\dot{\widehat{z}}(t) + \widetilde{\Gamma}(\widehat{z}(t), \widehat{x}(t)) + F(\widehat{z}(t)) = 0, \\ 
			\widehat{z}(0) = z_0
		\end{gather*}
		and there is a constant $C = C(R, |z_0|, \tau)$ such that
		\[
			\|\widetilde{z} - \widehat{z}\|_{C([0,\tau]; Z)} \leq C ||| \widetilde{x} - \widehat{x} |||_{\tau}^{1/2}.
		\]
	\end{lemma}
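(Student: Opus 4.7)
The plan is to combine an integration-by-parts trick — which turns the relaxation-norm smallness of $\eta := \widetilde{x} - \widehat{x}$ into a usable pointwise estimate — with a continuity (bootstrap) argument in time. Set $r := \widetilde{z} - \widehat{z}$ and $\Phi(t) := \int_0^t \eta(s)\,{\rm d}s$, so that $\|\Phi\|_{C([0,\tau]; X)} = |||\eta|||_{\tau} < \delta$. Subtracting the equations for $\widetilde{z}$ and $\widehat{z}$ and using bilinearity of $\widetilde{\Gamma}$ gives
\[
\dot r + \widetilde{\Gamma}(r, \widetilde{x}) + \widetilde{\Gamma}(\widehat{z}, \eta) + F(\widetilde{z}) - F(\widehat{z}) = 0, \qquad r(0) = 0.
\]
The terms $\widetilde{\Gamma}(r, \widetilde{x})$ and $F(\widetilde{z}) - F(\widehat{z})$ are already Gronwall-compatible (given a uniform bound on $\widetilde{x}$ and local Lipschitz continuity of $F$ on a bounded set), but $\widetilde{\Gamma}(\widehat{z}, \eta)$ must be reshaped because $\eta$ is \emph{not} small in any $L^p$ norm.

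\emph{A priori estimate.} I first assume $\widehat{z}$ exists on some $[0, T]$ with $\|\widehat{z}\|_{C([0,T]; Z)} \le M$, and let $L_F$ be the Lipschitz constant of $F$ on the closed ball $\{|z| \le M+1\} \subset Z$. Integrating the equation for $r$ in time, the troublesome term $\int_0^t \widetilde{\Gamma}(\widehat{z}, \eta)\,{\rm d}s$ is rewritten using $\eta = \dot\Phi$, $\Phi(0) = 0$, and the equation $\dot{\widehat{z}} = -\widetilde{\Gamma}(\widehat{z}, \widehat{x}) - F(\widehat{z})$ for $\widehat{z}$, yielding
\[
\int_0^t \widetilde{\Gamma}(\widehat{z}, \eta)\,{\rm d}s = \widetilde{\Gamma}(\widehat{z}(t), \Phi(t)) + \int_0^t \widetilde{\Gamma}\bigl(\widetilde{\Gamma}(\widehat{z}, \widehat{x}) + F(\widehat{z}),\, \Phi\bigr)\,{\rm d}s,
\]
whose norm is at most $C_1(R, M, L_F, \tau)\,|||\eta|||_{\tau}$. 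Combining this with the linear-in-$r$ bounds on the remaining terms and applying Gronwall's inequality yields
\[
|r(t)| \le C_1\, |||\eta|||_{\tau}\, \exp(C_2 t), \qquad t \in [0, T],
\]
with $C_2 = C_2(R, L_F)$.

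\emph{Bootstrap and conclusion.} I would then fix $M := \|\widetilde{z}\|_{C([0,\tau]; Z)} + 1$ and the resulting $L_F$; Picard--Lindel\"of gives local existence of $\widehat{z}$, so
\[
T^* := \sup\bigl\{ t \in [0, \tau] : \widehat{z} \text{ exists on } [0, t] \text{ with } \|\widehat{z}\|_{C([0,t]; Z)} \le M \bigr\}
\]
is strictly positive. Choosing $\delta$ small enough that $C_1 \delta \exp(C_2 \tau) < 1/2$, the a priori estimate applied on $[0, T^*]$ yields $|\widehat{z}(t)| \le |\widetilde{z}(t)| + 1/2 < M$ there; by continuity this rules out $T^* < \tau$, so $T^* = \tau$ and the linear bound $\|r\|_{C([0,\tau]; Z)} \le C\,|||\eta|||_{\tau}$ holds globally. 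Since $|||\eta|||_\tau \le \delta \le 1$ (up to shrinking $\delta$), this is strictly stronger than the $1/2$-power bound claimed in the lemma; the statement is written with the $1/2$ exponent so as to match the PDE version in~\cite{KoikeNersesyanRisselTucsnak2025}, but in the present finite-dimensional setting the linear bound is what the argument actually produces.

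\emph{Expected obstacles.} The principal delicacy is the ordering of quantifiers: $M$ must be fixed \emph{before} $L_F$, which must be fixed \emph{before} $\delta$, otherwise the constants entering the Gronwall estimate would depend circularly on the very bound we are trying to establish. A secondary technical point is verifying that the integration by parts is legitimate; this requires $\widehat{z} \in W^{1,\infty}([0, T^*]; Z)$, which follows from its ODE since $\widehat{x}$ is continuous and $\widehat{z}$, $F(\widehat{z})$ are bounded on $[0, T^*]$. The adaptation beyond the argument of~\cite{KoikeNersesyanRisselTucsnak2025} is modest — essentially just replacing the linear damping term there by a locally Lipschitz $F$ absorbed through the bootstrap in $M$.
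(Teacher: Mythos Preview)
Your proposal is correct and follows the same overall strategy as the paper --- integrate by parts to convert $|||\eta|||_\tau$-smallness into a usable bound, then close via a bootstrap on the maximal interval of existence --- but the execution differs in two respects worth noting.

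First, the paper multiplies the difference equation by $z$ and estimates $|z(t)|^2$ (an energy argument), whereas you integrate the equation for $r$ directly and estimate $|r(t)|$. This is why the paper obtains the square-root bound and you obtain the sharper linear one; your observation that the $1/2$ exponent is an artefact of the method rather than the truth in finite dimensions is accurate. Second, the bilinear splitting is swapped: the paper writes $\widetilde{\Gamma}(z,\widehat{x}) + \widetilde{\Gamma}(\widetilde{z},\eta)$ and therefore, after integrating by parts against $z$, must bound terms involving $\dot{z}$ (handled somewhat coarsely by appealing to compactness of the range of $\dot{\widehat{z}}$ on $[0,\widehat{T}]$). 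Your splitting $\widetilde{\Gamma}(r,\widetilde{x}) + \widetilde{\Gamma}(\widehat{z},\eta)$ leads after integration by parts only to $\dot{\widehat{z}}$, which you substitute directly from the ODE; this is cleaner and keeps all constants explicit. Your handling of the quantifier ordering ($M$ before $L_F$ before $\delta$) is also more carefully spelled out than in the paper. The paper's energy formulation is the natural one to carry over from the PDE setting of \cite{KoikeNersesyanRisselTucsnak2025}, but in the present finite-dimensional context your direct approach is both simpler and stronger.
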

	
	\begin{proof}
		By classical local existence results, it is known that there is a maximal time $\widehat{\tau} \in [0,\tau]$ until which~$\widehat{z}$ exists, and that $\widehat{z}$ must blowup at time $\widehat{\tau}$ if $\widehat{\tau} \in (0,\tau)$. The strategy is to estimate for $t \in (0,\widehat{\tau})$ the difference $z(t) \coloneq \widetilde{z}(t) - \widehat{z}(t)$, which satisfies the equation
		\begin{equation}\label{equation:diffzzz}
			\dot{z} + \widetilde{\Gamma}(z, \widehat{x}) + \widetilde{\Gamma}(\widetilde{z}, \widetilde{x}-\widehat{x}) = F(\widehat{z}) - F(\widetilde{z})
		\end{equation}
		with initial condition $z(0) = 0$. First, we consider the third term in the left-hand side of \eqref{equation:diffzzz}, multiply by $z$, and observe that for all $t \in [0,\widehat{\tau})$ it holds
		\begin{equation*}
			\begin{aligned}
				\int_0^t \left[\widetilde{\Gamma}(\widetilde{z}, \widetilde{x}-\widehat{x}) z\right](s) \, {\rm d}s & = \int_0^t \widetilde{\Gamma}\left(\widetilde{z}(s), \frac{\rm d}{{\rm d}s} \int_0^s (\widetilde{x}(r)-\widehat{x}(r)) \, {{\rm d}r} \right) z(s) \, {{\rm d}s} \\
				& = \widetilde{\Gamma}\left(\widetilde{z}(t), \int_0^t (\widetilde{x}(r)-\widehat{x}(r)) \, {{\rm d}r} \right) z(t) \\
				& \quad - \int_0^t \widetilde{\Gamma}\left(\dot{\widetilde{z}}(s), \int_0^s (\widetilde{x}(r)-\widehat{x}(r)) \, {{\rm d}r} \right) z(s) \, {{\rm d}s} \\
				& \quad - \int_0^t \widetilde{\Gamma}\left(\widetilde{z}(s), \int_0^s (\widetilde{x}(r)-\widehat{x}(r)) \, {{\rm d}r} \right) \dot{z}(s) \, {{\rm d}s}.
			\end{aligned}
		\end{equation*}
		Moreover, for any $\widehat{T} \in (0, \widehat{\tau})$ there exists by assumption a compact subset of $Z$ containing $\{\widetilde{z}(q), \widehat{z}(r), \dot{\widetilde{z}}(s), \dot{\widehat{z}} \, | \, q,r,s,t \in [0,\widehat{T}]\}$. Thus, with a constant $C > 0$, depending on $R$, $z_0$, and $\tau$, it follows that
		\[
			\left| \int_0^t \widetilde{\Gamma}(\widetilde{z}, \widetilde{x}-\widehat{x}) z(s) \, {{\rm d}s} \right| \leq C  ||| \widetilde{x} - \widehat{x} |||_{\tau} \boxnote{\qquad\quad(t\in [0,\widehat{T}]).}
		\]
		As a result, denoting by $L_{\widehat{T}}  > 0$ the minimal Lipschitz constant for $F$ on the smallest ball containing the $1$-neighborhood of $\{\widetilde{z}(t), \widehat{z}(t) \, | \, t \in [0,\widehat{T}]\}$, depending on $\widehat{T}$, $z_0$, and $R$, it can be inferred for all $t \in [0,\widehat{T}]$ that
		\begin{equation*}
			\begin{aligned}
				\frac{|z(t)|^2}{2} & \leq \int_0^t L_{\widehat{T}} |z(s)|^2 \, {{\rm d}s} + \int_0^t |\widehat{x}(s)| |z(s)|^2 \, {{\rm d}s} \\
				& \quad + C ||| \widetilde{x} - \widehat{x} |||_{\tau},
			\end{aligned}
		\end{equation*}
		which yields together with Gr\"onwall's inequality that
		\[
			|z(t)|^2 \leq 2 C ||| \widetilde{x} - \widehat{x} |||_{\tau} \exp \left( 2\int_0^t \left(L_{\widehat{T}} + |\widehat{x}(s)|\right) \, {{\rm d}s} \right).
		\]
		The above estimate implies the announced conclusion, provided that we check that $\widehat{T}$ could be chosen as $\widehat{\tau}$ and that $\widehat{\tau} = \tau$ when $|||\widehat{x} - \widetilde{x}|||_{\tau} < \delta$ holds for $\delta > 0$ sufficiently small. To see this, we argue by contradiction and assume that for all $\delta_0 > 0$ there would exist $\delta \in (0,\delta_0)$ such that $\widehat{\tau} \in (0,\tau)$ despite $|||\widehat{x} - \widetilde{x}|||_{\tau} < \delta$. Then, as $\{\widetilde{z}(t) \, | \, t \in [0,\tau]\}$ is contained in a compact set, there would be a first time $\overline{\tau} \in (0, \tau)$ such that $|z(\overline{\tau})| = 1$. In this case, we fix the Lipschitz constant $L_{\overline{\tau}}$ in the same manner as above and note that
		\begin{equation}\label{equation:lsct}
			|z(\overline{\tau})|^2 \leq 2 C ||| \widetilde{x} - \widehat{x} |||_{\tau} \exp \left( 2\tau \left(L_{\overline{\tau}} + R\right)  \right),
		\end{equation}
		which would produce a contradiction by taking
		\[
			\delta_ 0 = \frac{1}{3C \exp \left( 2\tau \left(L_{\overline{\tau}} + R\right) \right)}.
		\]
		Here, we used that unlike $\overline{\tau}$ the constant $L_{\overline{\tau}}$ in the right-hand side of \eqref{equation:lsct} is actually independent of $\delta_0$, thanks to the definitions of $\overline{\tau}$ and $L_{\overline{\tau}}$. 
	\end{proof}

	\subsection*{Acknowledgments}
    MR is supported by a start-up grant from ShanghaiTech University. 

    \phantomsection	
	\addcontentsline{toc}{section}{References}

\bibliographystyle{alpha}
\bibliography{bib}

\end{document}